\newtheorem{thm}{Theorem}[section]
\newtheorem{prop}[thm]{Proposition}
\newtheorem{lem}[thm]{Lemma}
\newtheorem{cor}[thm]{Corollary}
\newtheorem{rem}[thm]{Remark}
\theoremstyle{definition}
\newtheorem{definition}[thm]{Definition}
\theoremstyle{remark}
\newtheorem{remark}[thm]{Remark}
\numberwithin{equation}{section}
\newcommand{\R}{\mathbb{R}} 
\newcommand{\C}{\mathbb{C}} 
\newcommand{\Z}{\mathbb{Z}}
\newcommand{\T}{\mathbb{T}}
\newcommand{\TT}{\mathcal{T}}
\newcommand{\CC}{\mathrm{C}}
\newcommand{\dH}{\dim_{\mathrm{H}}}
\DeclareMathOperator{\M}{M}
\begin{document}

\title[Hausdorff dimension of measures with arithmetically restricted spectrum]{Hausdorff dimension of measures with arithmetically restricted spectrum}

\author{R. Ayoush}
\address{Institute of Mathematics, Polish Academy of Sciences
00-656 Warszawa, Poland}
\email{rayoush@impan.pl}

\author{D. Stolyarov}
\thanks{D. S. is supported by the Russian Science Foundation grant N. 19-71-30002.}
\address{Department of Mathematics and Computer Science, St. Petersburg State University; 199178,14th line 29, Vasilyevsky Island, St. Petersburg, Russia}
\email{d.m.stolyarov@spbu.ru}

\author{M. Wojciechowski}
\address{Institute of Mathematics, Polish Academy of Sciences
00-656 Warszawa, Poland}
\email{miwoj.impan@gmail.com}

\subjclass[2000]{42B10, 28A78}
\keywords{Hausdorff dimension, Fourier transform}

\begin{abstract}
We provide an estimate from below for the lower Hausdorff dimension of measures on the unit circle based on the arithmetic properties of their spectra. 
We obtain our bounds via application of a general result for abstract $q$-regular martingales to the Gundy--Varopoulos backwards martingale.
To show the sharpness of our method, we improve the best known numerical lower bound for the Hausdorff dimension of certain Riesz products. 
\end{abstract}

\maketitle

\section{Introduction}
The most common way to estimate the lower Hausdorff dimension of a measure using Harmonic Analysis tools is the so-called energy method. It involves examination of the summability properties of the Fourier coefficients of a measure. In general, however, the energy and Hausdorff dimensions may be different (see, e.g. Proposition 3.4 in~\cite{HR} or Chapter 13 in~\cite{M}). In this paper, we investigate not only the size of the spectrum, but also its arithmetic properties.

By $\T = \R / \Z$ we denote the circle group. 
\begin{definition}
	Let~$\mu$ be a finite (non-negative) Borel measure on~$\T$. The quantity
	\begin{equation*}
	\dH(\mu) = \inf \{\alpha\colon \hbox{there exists a Borel set~$F$ such that} \ \mu(F) \neq 0, \ \dH F \leq \alpha \}
	\end{equation*}
is called the lower Hausdorff dimension of $\mu$.
\end{definition}


\begin{definition}
	Let $A \subset \Z$. We denote by $\M_{A}(\T)$ the set of finite Borel measures satisfying $\hat{\mu}(n) = 0$ for any $n \in \Z \setminus A$.
\end{definition}

Throughout the article $q$ is a fixed integer greater than $2$.  The symbol $ \parallel $ means the relation of exact division of integers. That is $a^{n} \parallel b$ if and only if $a^{n} | b$ but $a^{n+1} \nmid b$. For any $B \subset \{1, 2, \dots , q-1\}$, let us define 
\begin{equation*}
\CC_{B} = \{kq^{n}\colon k \mod q \in B,\ n \geq 0\}\cup\{0\}.
\end{equation*}
We denote  the group of residues modulo~$q$ by~$\mathbb{Z}_q$ and identify the set~$\{0,1,\ldots,q-1\}$ with it in the natural way. 
Our first result may be thought of as an uncertainty principle (see~\cite{HJ}). 
\begin{thm} \label{UP}
	Let $B\subset \mathbb{Z}_q\setminus\{0\}$ and let $\mu \in \M_{\CC_{B}}(\T)$ be a finite non-negative measure. If $B \subset H\setminus\{0\}$ for some subgroup $H\subset \mathbb{Z}_q$\textup, then
	\begin{equation*}
\dH(\mu) \geq 1 - \frac{\log |H|}{\log q}.
	\end{equation*}
	Moreover, if the inclusion~$B \subset H \setminus \{0\}$ is proper, then the above inequality is strict in the following sense\textup: there exists~$\delta > 0$ independent of~$\mu$ such that
	\begin{equation*}
\dH(\mu) \geq 1 - \frac{\log |H|}{\log q} + \delta.
	\end{equation*}
	In particular\textup, if $B \neq \mathbb{Z}_q\setminus\{0\}$\textup, then~$\dH(\mu) > \delta$ for any non-negative~$\mu \in \M_{C_{B}}(\T)$.
\end{thm}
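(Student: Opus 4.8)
\emph{Step 1: reduction to a growth estimate for the $q$-adic density.} The plan is to deduce the bound from a pointwise statement. For $x\in\T$ let $I_n(x)$ be the $q$-adic interval of length $q^{-n}$ containing $x$ and set $\rho_n(x)=q^n\mu(I_n(x))$. By the Billingsley lemma (the $q$-adic form of the local mass distribution principle): if for $\mu$-a.e.\ $x$ one has $\liminf_n\frac{\log\mu(I_n(x))}{-n\log q}\ge\alpha$, then every Borel set of positive $\mu$-measure has Hausdorff dimension $\ge\alpha$, hence $\dH(\mu)\ge\alpha$. Since $\log\mu(I_n(x))=-n\log q+\log\rho_n(x)$, the hypothesis is exactly $\limsup_n\frac1n\log\rho_n(x)\le\log|H|$ for $\mu$-a.e.\ $x$ (and the strict version corresponds to the same with $\log|H|$ decreased by a fixed positive amount). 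So the task becomes: control the growth rate of the $q$-adic density of $\mu$.

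\emph{Step 2: the martingale and the spectrum.} I would then use that $(\rho_n)$ is a non-negative martingale for Lebesgue measure and the $q$-adic filtration $(\mathcal F_n)$, and, reversed in the scale parameter, plays the role of the Gundy--Varopoulos backwards martingale of $\mu$. The spectral hypothesis should enter through the self-similarity
\[
\CC_B=E_B\ \sqcup\ q\,\CC_B,\qquad E_B:=\{m\in\Z:\ m\bmod q\in B\},
\]
together with the fact that $(T_q)_*\mu\in\M_{\CC_B}(\T)$ for $T_q\colon x\mapsto qx$: decomposing the spectrum by $q$-adic valuation writes $\mu=\mu(\T)\,dx+\sum_{j\ge0}\mu_j$ with $\mu_j(x)=\nu_j(q^jx)$ for a distribution $\nu_j$ whose spectrum lies in $E_B$. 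The consequence is that the ``fresh'' piece $\mu_{n-1}$ contributes to $\rho_n-\rho_{n-1}$ a term whose restriction to every level-$(n-1)$ atom, viewed as a function on the $q$ children $\cong\mathbb Z_q$, has discrete Fourier transform supported in $B$, hence in the subgroup $H$; equivalently, up to the contribution of the coarser scales the refinement vectors are $|H|$-periodic on $\mathbb Z_q$. Conditioning on the atom $A\in\mathcal F_{n-1}$ through $x$, the next child is picked with probabilities $p_\ell=\mu(A_\ell)/\mu(A)$ and $\rho_n/\rho_{n-1}=qp_\ell$ there, so
\[
\E_\mu\!\big[\log(\rho_n/\rho_{n-1})\mid\mathcal F_{n-1}\big]=\log q-\mathcal H\big((p_\ell)_{\ell\in\mathbb Z_q}\big),
\]
$\mathcal H$ being Shannon entropy. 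Since an $|H|$-periodic probability vector on $\mathbb Z_q$ has entropy at least $\log(q/|H|)$, the right side is $\le\log|H|$; the same periodicity gives $\E_\mu[(\rho_n/\rho_{n-1})^s\mid\mathcal F_{n-1}]\le|H|^s$ for small $s>0$, whence $\E_\mu[\rho_n^s]\le\mu(\T)^s|H|^{ns}$, and a Borel--Cantelli argument forces $\limsup_n\frac1n\log\rho_n\le\log|H|$ $\mu$-a.e. With Step 1 this yields $\dH(\mu)\ge1-\log|H|/\log q$; this is exactly the shape of statement supplied by the abstract theorem on $q$-regular martingales.

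\emph{Step 3: strictness and the final assertion.} For the strict inequality, if $B\subsetneq H\setminus\{0\}$ I would fix $b_0\in H\setminus(B\cup\{0\})$; then the character $b_0$ never occurs in a fresh increment, so after the periodicity reduction the admissible probability vectors satisfy the extra relation $\widehat p(b_0)=0$, and since no point mass satisfies this, a compactness argument on the simplex gives a strict gain $\E_\mu[(\rho_n/\rho_{n-1})^s\mid\mathcal F_{n-1}]\le(|H|-\eta)^s$ with $\eta=\eta(q,H,b_0)>0$. Running Step 2 with $|H|-\eta$ in place of $|H|$ gives $\dH(\mu)\ge1-\log|H|/\log q+\delta$ for a $\delta=\delta(q,H)>0$ independent of $\mu$. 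The last assertion is then the case $H=\mathbb Z_q$: one always has $B\subset\mathbb Z_q\setminus\{0\}$, and $B\ne\mathbb Z_q\setminus\{0\}$ makes this inclusion proper, so the Moreover part gives $\dH(\mu)\ge 1-1+\delta=\delta>0$ for every non-negative $\mu\in\M_{\CC_B}(\T)$.

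\emph{The main obstacle.} The hard part will be the step that converts ``spectrum in $\CC_B$'' into ``$q$-regularity of the martingale''. The genuine level-$n$ refinement vectors are contaminated by the Fourier modes of $q$-adic valuation $<n-1$, which are \emph{not} $|H|$-periodic, and a priori such a remainder could concentrate the mass among the $q$ children and push $\E_\mu[(\rho_n/\rho_{n-1})^s\mid\mathcal F_{n-1}]$ up toward $q^s$, destroying the bound. One must show that, precisely because those coarse modes again come from distributions with spectrum in $E_B$, their cumulative effect across scales behaves like martingale differences that do not lift the conditional $s$-th moment above $|H|^s$; isolating the fresh, $H$-supported part as the only source of new entropy is the content of the abstract $q$-regular martingale result and is where the real difficulty lies.
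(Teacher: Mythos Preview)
Your plan has the right shape, but the central claim in Step~2 fails for the filtration you chose, and the ``main obstacle'' you flag is not a technicality to be absorbed into an abstract theorem---it is exactly the point where the argument breaks. For the standard $q$-adic \emph{interval} martingale $\rho_n(x)=q^n\mu(I_n(x))$, the refinement vector $(\mu(A_\ell)/\mu(A))_{\ell\in\Z_q}$ on the children of a level-$(n-1)$ atom need \emph{not} have $\Z_q$-Fourier transform supported in $H$, even when $\mu\in\M_{\CC_B}$. A concrete counterexample: take $q=4$, $B=\{2\}$, $H=\{0,2\}$, and $\mu=(1+\cos 4\pi x)\,dx\in\M_{\CC_B}$; at the second level the children of $[0,\tfrac14)$ satisfy $\mu\big([0,\tfrac1{16})\big)\ne\mu\big([\tfrac18,\tfrac3{16})\big)$, so the vector is not $2$-periodic. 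The low-valuation modes genuinely redistribute mass among the $q$ children in a way that ignores the subgroup constraint, and your conditional bound $\E_\mu[(\rho_n/\rho_{n-1})^s\mid\mathcal F_{n-1}]\le|H|^s$ is simply false for this filtration.

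The paper's resolution is not to control this contamination but to avoid it entirely by choosing a different filtration: the Gundy--Varopoulos \emph{backwards} martingale, in which the atoms at level $k$ are arithmetic progressions (cosets of $q^{k-N}\Z$ inside the discrete grid $q^{-N}\Z/\Z$), not intervals. With this choice formula~\eqref{martdif} gives $df_k(x)=\sum_{q^{N-k}\parallel l}\hat f(l)e^{2\pi i lx}$ exactly, so the difference vector on the $q$ children of any vertex is the inverse $\Z_q$-Fourier transform of a sequence supported precisely in $B$, with no coarse-scale leakage whatsoever (Lemmas~\ref{dftlemma} and~\ref{DifferencesInWB}). Once that is in hand, your entropy/moment heuristic becomes the computation $\kappa'(1)\ge -\log|H|$: the admissible increments $v\in W_B$ are $H^\perp$-invariant, so the extremal $(1+v_j)$ concentrate on a single $H^\perp$-coset, giving $\frac1q\sum_j(1+v_j)\log(1+v_j)\le\log|H|$; strict inclusion $B\subsetneq H\setminus\{0\}$ keeps the extremals outside $W_B$ and strict convexity of the $L_p$-norm yields $\kappa'(1)>-\log|H|$. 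The dimension bound itself is then extracted via an $L_p$ growth estimate plus a Frostman-type covering argument (Theorem~\ref{transfer}) rather than Billingsley's lemma, but that is a secondary difference; the essential idea missing from your proposal is the switch from the interval filtration to the periodic-coset one.
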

This theorem is a corollary of more general Theorem~\ref{transfer} below. The latter theorem provides better bounds based on the arithmetic structure of the set~$B$. In particular, it delivers simple numeric bounds for~$\delta$ in Theorem~\ref{UP}. However, Theorem~\ref{transfer} requires more notation, so we leave its formulation for a while.

We confront our methods with the question about determining the dimension of Riesz products. For convenience, let us focus on the class given by
\begin{equation}
	\mu_{a,q} = \prod_{k=0}^{\infty}\big(1+a\cos(2\pi q^{k}x)\big),
\end{equation}
where $a \in [-1,1]$. One of the most important advances in the mentioned problem is contained in the seminal work \cite{Pey} of Peyrière. In this paper, among other things, he proved the identity
\begin{equation}\label{peyr}
	\dH(\mu_{a,q}) = 1 - \frac{\int_{0}^{1}\log(1+a\cos(2\pi x))d\mu_{a,q}}{\log q}.
\end{equation}
We note that Peyrière considered Riesz products of more general type. Results of his work go beyond Hausdorff dimension estimates and shed light on random nature of those measures. Later, Fan~\cite{F2} gave an approximation result using probabilistic methods
\begin{equation}\label{fanthm}
\dH(\mu_{a,q}) = 1 - \frac{1}{\log q}\int\limits_{0}^{1}\log\big(1+a\cos(2\pi x)\big)\big(1+a\cos(2\pi x)\big)\,dx + O\Big(\frac{a}{q^2\log q}\Big),
\end{equation}
when~$q$ is a large number and~$|a| \leq \cos\big(\frac{\pi}{\lfloor\frac{q+1}{2}\rfloor+1}\big)$.

In contrast to the above, we are mainly interested in the case of (heuristically) the most singular Riesz products, i.e when $|a|$ is close or equal to 1. For such parameters we improve the best numerical lower bounds for $\dH(\mu_{a,q})$ derived directly from formula~\eqref{peyr} and those obtained by potential-theoretic methods (see~\cite{HR}, Corollary~3.2. and~\cite{M}, Corollary~13.4). The following theorem is a corollary of the already mentioned Theorem~\ref{transfer} below.
\begin{thm}\label{rieszestimate1}
For any integer $q\geq 3$ and $a \in [-1,1]$\textup, we have
\begin{equation*}
	\dH(\mu_{a,q}) \geq 1 - \frac{1}{q \log q}\sum\limits_{j=1}^{q-2}\Big(1 - \frac{\cos\frac{(2j+1)\pi}{q}}{\cos\frac{\pi}{q}}\Big)\log \Big(1 - \frac{\cos\frac{(2j+1)\pi}{q}}{\cos\frac{\pi}{q}}\Big)
\end{equation*}
\end{thm}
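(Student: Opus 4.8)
The plan is to obtain Theorem~\ref{rieszestimate1} as a special case of the yet-to-be-stated general transfer result (Theorem~\ref{transfer}), by checking that the Riesz product $\mu_{a,q}$ lives in a space $\M_{\CC_B}(\T)$ for a suitable set $B$, and then feeding the arithmetic structure of that $B$ into the general bound. First I would analyze the spectrum of $\mu_{a,q}$: expanding the infinite product $\prod_{k\geq 0}(1+a\cos(2\pi q^k x))$ and using that the frequencies $q^k$ are lacunary with ratio $q\geq 3$, every nonzero Fourier coefficient is supported on integers of the form $\sum \varepsilon_k q^k$ with $\varepsilon_k\in\{-1,0,1\}$; writing such an integer in base $q$ shows that its leading (i.e.\ highest nonzero) base-$q$ digit lies in $\{1,\dots,q-1\}$ but — because of the $\pm1$ coefficient structure together with borrows — its residue modulo $q$ after factoring out the exact power of $q$ dividing it always lands in a specific set $B\subset\Z_q\setminus\{0\}$. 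The key combinatorial step is to pin down exactly which residues $b\in\{1,\dots,q-1\}$ occur as $kq^{-v_q(n)}\bmod q$ for $n\in\operatorname{spec}\mu_{a,q}$; I expect this to be all of $\{1,\dots,q-1\}$ in general, so that the gain over the trivial bound comes not from restricting $B$ but from the finer, weighted form of Theorem~\ref{transfer}.

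Once the spectrum is located, the second step is to apply Theorem~\ref{transfer}. Since that theorem is advertised as giving, for measures in $\M_{\CC_B}(\T)$, a lower bound of the shape $\dH(\mu)\geq 1 - (\log q)^{-1}\cdot(\text{some entropy-type quantity attached to } B)$, the task reduces to computing that quantity for the $B$ produced above. The precise form of the right-hand side in the statement — a sum $\frac1{q\log q}\sum_{j=1}^{q-2}\phi\!\big(1-\tfrac{\cos((2j+1)\pi/q)}{\cos(\pi/q)}\big)$ with $\phi(t)=t\log t$ — strongly suggests that the relevant object is the entropy of a probability vector whose entries are $\tfrac1q\big(1-\tfrac{\cos((2j+1)\pi/q)}{\cos(\pi/q)}\big)$. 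So I would try to identify these numbers as transition probabilities of the Gundy--Varopoulos backwards martingale associated with $\mu_{a,q}$ at the extreme parameter (the value $a$ that makes the bound worst, presumably $|a|=1$ or the critical value where $1+a\cos\theta$ first touches $0$), using that the $q$-adic conditional densities of a Riesz product are built from the factor $1+a\cos(2\pi q^k x)$ evaluated at the $q$ points of a refinement, and that the cosine values $\cos(\pi/q),\cos(3\pi/q),\dots$ are exactly what appears when one samples $\cos(2\pi x)$ at such points. The uniformity in $a\in[-1,1]$ should follow because the entropy term is monotone/maximized at the endpoint, so the stated bound holds a fortiori for all $a$.

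The main obstacle, as I see it, is the bookkeeping in the first step: controlling carries in base-$q$ arithmetic when one subtracts powers of $q$ (the $\varepsilon_k=-1$ terms) to determine precisely the set $B$ and, more importantly, to get the right \emph{weights} — that is, to show that the conditional probabilities fed into the entropy are exactly $\tfrac1q\big(1-\tfrac{\cos((2j+1)\pi/q)}{\cos(\pi/q)}\big)$ and not some other normalization. A secondary but real difficulty is verifying that Theorem~\ref{transfer}, which is presumably stated for abstract $B$ with a bound in terms of sizes of subgroups or of some combinatorial invariant of $B$, actually yields a bound of the finer \emph{weighted-entropy} form needed here; if the published Theorem~\ref{transfer} is phrased only in the coarse subgroup language of Theorem~\ref{UP}, then I would instead need to invoke whatever sharper, martingale-entropy version of it the paper proves, and check that the backwards-martingale transition kernel for $\mu_{a,q}$ has the claimed law.

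Finally, I would double-check the edge cases: that the sum runs to $q-2$ rather than $q-1$ reflects that one residue class (the one containing the leading digit with no borrow, presumably $j=0$ giving $\cos(\pi/q)/\cos(\pi/q)=1$ and hence $\phi(0)=0$, or the top index) contributes zero to the entropy and can be dropped; and that for $q=3$ the sum is a single term, recovering (and improving) the known Riesz-product bounds. I would also confirm numerically for small $q$ that the right-hand side indeed exceeds the bound obtained by plugging the worst-case $a$ into Peyrière's formula~\eqref{peyr}, which is the sharpness claim advertised in the abstract; this comparison is routine once the entropy expression is in hand.
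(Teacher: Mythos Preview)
Your proposal contains a genuine gap at the very first step, and this propagates into a misconception about where the explicit cosine values come from.

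The determination of $B$ is much simpler than you suggest, and the answer is not the full set $\{1,\dots,q-1\}$. Every nonzero frequency in $\operatorname{spec}\mu_{a,q}$ has the form $n=\sum_{j=0}^{M}\varepsilon_j q^{k_j}$ with $k_0<k_1<\cdots$ and $\varepsilon_j\in\{-1,+1\}$. There are no borrows or carries to track: $q^{k_0}\parallel n$ and $n/q^{k_0}\equiv \varepsilon_0\equiv \pm 1\pmod q$. Hence $B=\{1,q-1\}$, and this is the observation that launches the paper's proof. If $B$ were all of $\Z_q\setminus\{0\}$ as you conjecture, Theorem~\ref{transfer} would give only the trivial bound $\dH(\mu)\geq 0$, since $\kappa'(1)=-\log q$ in that case.

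The second misconception is that the numbers $1-\cos((2j+1)\pi/q)/\cos(\pi/q)$ arise as transition probabilities of the backwards martingale of $\mu_{a,q}$ at some extremal $a$. They do not. The quantity $\kappa'(1)$ in Theorem~\ref{transfer} is the infimum of $-\tfrac{1}{q}\sum_j(1+v_j)\log(1+v_j)$ over the \emph{abstract} constraint set $\{v\in W_B:\ v_j\geq -1\}$, depending only on $B$ and not on $\mu$ or $a$. With $B=\{1,q-1\}$, the space $W_B$ is exactly $\{c(\cos(2\pi j/q+\varphi))_{j=0}^{q-1}:c\in\R,\ \varphi\in[-\pi,\pi]\}$, a two-parameter family; the constrained entropy maximization over $(c,\varphi)$ is a convex problem on a one-dimensional boundary (Lemma~\ref{ConvexOptimisationLemma} in the paper), and the extremizer is $\varphi=\pi/q$, $c=1/\cos(\pi/q)$, which produces the claimed values. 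The uniformity in $a$ is then automatic, since the bound depends only on $B$.
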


 Theorem~\ref{rieszestimate1} delivers bounds which may be thought of as extensions of~\eqref{fanthm}.

\begin{prop}\label{rieszestimate}
	For any even integer $q\geq 4$ and $a \in [-1,1]$\textup, we have
	\begin{multline*}
	\dH(\mu_{a,q}) \geq\\ 1 - \frac{1-\log 2}{\log q} - \frac{1}{q\log q}\bigg(2\log 2 - \frac{2}{q\cos\frac{\pi}{q}}\int\limits_{\frac{\pi}{2}}^{\frac{q\pi}{4}}\log(\cos^2 z)\sin\frac{2z}{q}\,dz\bigg) - \frac{\log \cos\frac{\pi}{q}}{\log q}.
	\end{multline*}
\end{prop}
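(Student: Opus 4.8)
The plan is to evaluate the finite trigonometric sum appearing in Theorem~\ref{rieszestimate1} in closed form and read off the bound. Write $c=\cos\frac\pi q$ and $\theta_j=\frac{(2j+1)\pi}{q}$. The first step is the product-to-sum identity
\[
1-\frac{\cos\theta_j}{c}=\frac{c-\cos\theta_j}{c}=\frac{2}{c}\,\sin\tfrac{j\pi}{q}\,\sin\tfrac{(j+1)\pi}{q},
\]
which shows that the $j=0$ and $j=q-1$ terms vanish and that the summand is invariant under $j\mapsto q-1-j$. Since $q$ is even this pairing has no fixed point, so the sum over $j=1,\dots,q-2$ equals twice the sum over $j=0,\dots,\frac q2-1$; on that range both sines are nonnegative, hence $\log\!\big(1-\tfrac{\cos\theta_j}{c}\big)=\log\tfrac2c+\log\sin\tfrac{j\pi}{q}+\log\sin\tfrac{(j+1)\pi}{q}$.

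Next I would separate the contribution of the constant $\log\frac2c$. As $\sum_j\cos\theta_j=\operatorname{Re}\big(e^{i\pi/q}\sum_{j=0}^{q-1}e^{2\pi ij/q}\big)=0$, we get $\sum_{j=0}^{q/2-1}\big(1-\tfrac{\cos\theta_j}{c}\big)=\tfrac q2$, so the constant piece contributes $q\log2-q\log\cos\frac\pi q$ to the sum. For the $\log$-sine part I would use the averaging identity
\[
\Big(1-\tfrac{\cos\theta_j}{c}\Big)+\Big(1-\tfrac{\cos\theta_{j-1}}{c}\Big)=4\sin^2\tfrac{j\pi}{q},
\]
which follows from $\sin\frac{(j-1)\pi}{q}+\sin\frac{(j+1)\pi}{q}=2c\sin\frac{j\pi}{q}$: it turns the double $\log$-sine sum into $4\sum_{j=1}^{q/2-1}\sin^2\frac{j\pi}{q}\log\sin\frac{j\pi}{q}$, and the reflection $j\mapsto\frac q2-j$ turns this into $2\sum_{i=1}^{q/2-1}\cos^2\frac{i\pi}{q}\log\cos^2\frac{i\pi}{q}$.

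The heart of the proof is to pass from this last sum to the integral in the statement. I would insert the Fourier expansion $\log\cos^2\phi=-2\log2+2\sum_{k\ge1}\frac{(-1)^{k+1}}{k}\cos2k\phi$, write $\cos^2\frac{i\pi}{q}=\frac12\big(1+\cos\frac{2i\pi}{q}\big)$, and use the elementary values of $\sum_{i=1}^{q/2-1}\cos\frac{2im\pi}{q}$ (equal to $\frac q2-1$, $0$, or $-1$ according as $q\mid m$, $m$ is odd, or $m$ is even with $q\nmid m$). Regrouping so that the conditionally convergent sub-sums $\sum_{k\equiv\pm1(q)}\frac1k$ and $\sum_{q\mid k}\frac1k$ cancel — the bookkeeping being done by $\frac{q^2m}{m^2q^2-1}=\frac1m+\frac1{m(m^2q^2-1)}$ — leaves a closed form whose only non-elementary ingredient is the convergent series $\sum_{m\ge1}\frac1{m(m^2q^2-1)}$. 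Independently, substituting $z=\frac q2 w$ reduces the target integral to $\frac q2\int_{\pi/q}^{\pi/2}\log\cos^2\frac{qw}{2}\,\sin w\,dw$, and feeding the same Fourier series into it, via $\int_{\pi/q}^{\pi/2}\cos(qkw)\sin w\,dw=\frac{(-1)^{k+1}c}{q^2k^2-1}$ (an elementary antiderivative, the boundary contributions at $w=\frac\pi2$ vanishing because $qk\pm1$ is odd), produces exactly $qc\big(\sum_{m\ge1}\frac1{m(m^2q^2-1)}-\log2\big)$. Matching the two computations identifies the series with the integral; collecting this with the constant piece and with the mean value $\int_0^{\pi/2}\cos^2\phi\log\cos^2\phi\,d\phi=\frac\pi4(1-2\log2)$ (which supplies the $q(1-\log2)$) assembles into the estimate of the Proposition, the inequality being inherited from Theorem~\ref{rieszestimate1}.

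The step I expect to be the main obstacle is this sum-to-integral passage. The sum side needs no delicacy — $\sum_{i=1}^{q/2-1}\cos^2\frac{i\pi}q\log\cos^2\frac{i\pi}q$ is a finite sum, so one may pull the limit of the Fourier partial sums outside it — but the integral side does: since the Fourier series of $\log\cos^2$ is only conditionally convergent, term-by-term integration against $\sin w$ on $[\pi/q,\pi/2]$ must be justified, e.g. by viewing it as the $L^2$-pairing of the periodic function $\log\cos^2\frac{qw}{2}$ with $\sin w\cdot\mathbf 1_{[\pi/q,\pi/2]}$. The other delicate point is the regrouping that makes the individually divergent partial sums $\sum_{q\mid k}\frac1k$ and $\sum_{k\equiv\pm1(q)}\frac1k$ cancel, which I would carry out by combining equal-length initial segments of the two sums so that their tails match, leaving the convergent remainder $\sum_{m\ge1}\frac1{m(m^2q^2-1)}$. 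An alternative that avoids Fourier series altogether is to compare $\sum_{i=1}^{q/2-1}\cos^2\frac{i\pi}q\log\cos^2\frac{i\pi}q$ directly with $\frac q\pi\int_0^{\pi/2}\cos^2\phi\log\cos^2\phi\,d\phi$ by an Euler--Maclaurin estimate; there the delicate point becomes controlling the sign and magnitude of the remainder through the variation of $\big(\cos^2\phi\log\cos^2\phi\big)'$, which has a logarithmic singularity at $\frac\pi2$.
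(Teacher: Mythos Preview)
Your approach is correct in outline and genuinely different from the paper's. Both reduce the Proposition to the explicit evaluation of the sum
\[
S=\sum_{j=1}^{q-2}\Big(1-\tfrac{\cos\frac{(2j+1)\pi}{q}}{\cos\frac{\pi}{q}}\Big)\log\Big(1-\tfrac{\cos\frac{(2j+1)\pi}{q}}{\cos\frac{\pi}{q}}\Big)
\]
from Theorem~\ref{rieszestimate1}, but the evaluations proceed along different lines. You rewrite each summand via the product formula $c-\cos\theta_j=2\sin\frac{j\pi}{q}\sin\frac{(j+1)\pi}{q}$, use the averaging identity to collapse the resulting double $\log$--sine sum to $\sum_{i=1}^{q/2-1}\cos^2\frac{i\pi}{q}\log\cos^2\frac{i\pi}{q}$, and then match this to the target integral by expanding both in the Fourier series of $\log\cos^2$. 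This is sound (the term-by-term integration is justified exactly as you say, via the $L^2$ pairing, and the regrouping identity $\frac{mq^2}{m^2q^2-1}=\frac{1}{m}+\frac{1}{m(m^2q^2-1)}$ does absorb the divergent pieces), but it entails a fair amount of bookkeeping with conditionally convergent series and the case analysis on $\sum_{i=1}^{q/2-1}\cos\frac{2mi\pi}{q}$.

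The paper's route is considerably shorter. It introduces
\[
f(a)=\sum_{j=0}^{q-1}\Big(a-\cos\tfrac{(2j+1)\pi}{q}\Big)\log\Big|a-\cos\tfrac{(2j+1)\pi}{q}\Big|,
\]
so that $S=f(c)/c-q\log c$, and observes that
\[
f'(a)=q+\log\prod_{j=0}^{q-1}\Big|a-\cos\tfrac{(2j+1)\pi}{q}\Big|=q+\log\big(2^{2-q}T_p^2(a)\big),\qquad p=\tfrac q2,
\]
because the numbers $\cos\frac{(2j+1)\pi}{q}$ are exactly the roots of the Chebyshev polynomial $T_p$, each occurring twice in the list $j=0,\dots,q-1$. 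Since $q$ is even one has $f(0)=0$ by antisymmetry, so $f(c)=\int_0^{c}f'$; substituting $a=\cos x$ and then $z=px$ gives the integral in the statement in one line. This bypasses all Fourier series and conditional convergence issues. What your approach buys in exchange is that it is more self-contained (no Chebyshev identity needed) and makes the appearance of the $\log\cos^2$ kernel transparent; the paper's approach buys brevity and avoids the convergence subtleties you flagged. Your Euler--Maclaurin alternative, by contrast, would only give an \emph{approximate} identity with a remainder, so it would not recover the exact equality of Lemma~\ref{Computation} that the Proposition encodes.
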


\begin{prop}\label{RieszEstimate}
For any integer $q\geq 3$ and $a \in [-1,1]$\textup, we have
\begin{equation*}
	\dH(\mu_{a,q}) \geq 1 - \frac{1-\log 2}{\log q} - \frac{4\pi}{q\log q} - \frac{1}{\log q}\Big(\frac{1}{\cos\frac{\pi}{q}} - 1\Big).
\end{equation*}
\end{prop}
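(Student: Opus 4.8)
The plan is to derive the estimate from Theorem~\ref{rieszestimate1} by carefully evaluating the finite sum it produces. Put $\varphi(t)=t\log t$ (with $\varphi(0):=0$), $c=\cos\frac{\pi}{q}$, $\alpha_j=\frac{(2j+1)\pi}{q}$ and $t_j=1-\frac{\cos\alpha_j}{c}$; then Theorem~\ref{rieszestimate1} reads $\dH(\mu_{a,q})\ge 1-\frac{1}{q\log q}\sum_{j=1}^{q-2}\varphi(t_j)$, so it suffices to show
\[
\frac1q\sum_{j=1}^{q-2}\varphi(t_j)\ \le\ (1-\log 2)+\frac{4\pi}{q}+\Big(\tfrac{1}{\cos(\pi/q)}-1\Big).
\]
Since $\cos\alpha_0=\cos\alpha_{q-1}=c$, the $j=0$ and $j=q-1$ terms vanish, so the left side equals $\frac1q\sum_{j=0}^{q-1}\varphi(t_j)$, and here the nodes $\alpha_0,\dots,\alpha_{q-1}$ are exactly the midpoints of the $q$ arcs of length $\frac{2\pi}{q}$ tiling $[0,2\pi]$. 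Set $s_j:=1-\cos\alpha_j=2\sin^2\frac{\alpha_j}{2}>0$ and $\eta:=\sec\frac{\pi}{q}-1$, so that $t_j=s_j-\eta\cos\alpha_j$, and split $\varphi(t_j)=\varphi(s_j)+\big(\varphi(t_j)-\varphi(s_j)\big)$; the two resulting sums are estimated separately. (For $q\le 7$ the right-hand side above is negative and there is nothing to prove, so one may assume $q$ moderately large.)

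\emph{First piece.} The quantity $\frac1q\sum_{j=0}^{q-1}\varphi(s_j)$ is the $q$-node midpoint Riemann sum of $\frac1{2\pi}\int_0^{2\pi}\varphi(1-\cos\theta)\,d\theta$. Writing $1-\cos\theta=2\sin^2\frac\theta2$ and using the classical value $\int_0^{\pi/2}\sin^2u\log\sin u\,du=\frac{\pi}{8}(1-2\log 2)$, this integral equals exactly $1-\log 2$; this is the number $\int_0^1\log(1+\cos 2\pi x)\,(1+\cos 2\pi x)\,dx$ underlying the $|a|=1$ case of~\eqref{fanthm}. The function $\theta\mapsto\varphi(1-\cos\theta)$ is piecewise monotone with only three interior critical points and total variation $\frac4e+4\log 2$, so the midpoint-rule error is at most $\frac{1}{2q}\big(\frac4e+4\log 2\big)$ up to the usual small correction for the critical cells, in any case below $\frac{4\pi}{q}$. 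This accounts for the $(1-\log 2)$ term.

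\emph{Second piece.} For $1\le j\le q-2$ one has $s_j,t_j>0$, and Taylor's formula gives $\varphi(t_j)-\varphi(s_j)=(1+\log s_j)(t_j-s_j)+\frac{1}{2\xi_j}(t_j-s_j)^2$ with $\xi_j$ between $s_j$ and $t_j$; the endpoint terms $j=0,q-1$ contribute only $\frac1q\big(|\varphi(s_0)|+|\varphi(s_{q-1})|\big)=O(q^{-3}\log q)$. Using $\sum_{j=0}^{q-1}\cos\alpha_j=0$ (a sum of $q$th roots of unity) and $\log s_j=\log 2+2\log\sin\frac{\alpha_j}2$, the linear part collapses to $-2\eta\,\frac1q\sum_{j}\big(\log\sin\tfrac{\alpha_j}{2}\big)\cos\alpha_j$, and $\frac1q\sum_{j}\big(\log\sin\tfrac{\alpha_j}{2}\big)\cos\alpha_j$ is a midpoint Riemann sum for $\frac1\pi\int_0^\pi\log(\sin u)\cos 2u\,du=-\tfrac12$; hence the linear part equals $\eta+O(q^{-3}\log q)=\big(\sec\frac{\pi}{q}-1\big)+O(q^{-3}\log q)$. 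The quadratic remainder is nonnegative, and since $s_j,t_j\gtrsim q^{-2}\min(j,q-j)^2$ (so $\xi_j$ is bounded below by the same quantity, whence $\sum_j\xi_j^{-1}=O(q^2)$) while $(t_j-s_j)^2=\eta^2\cos^2\alpha_j=O(q^{-4})$, it is $O(q^{-3})$. Thus the second piece is $\big(\sec\frac{\pi}{q}-1\big)+O(q^{-3}\log q)$, and adding the two pieces gives the claim once one checks that $\frac{4\pi}{q}$ dominates the total error — the variation-controlled error $\tfrac{1}{2q}(\tfrac4e+4\log 2)$ together with the $O(q^{-3}\log q)$ remainders — for every $q\ge 3$, which is a routine estimate with explicit constants.

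The only genuinely delicate point is the Riemann-sum approximation of $u\mapsto\log(\sin u)\cos 2u$, whose integrand has integrable logarithmic singularities at $0$ and $\pi$: one must verify that the midpoint error there is $O(q^{-1})$ rather than $O(q^{-1}\log q)$. This reduces to a short local expansion on the first cell $[0,\frac\pi q]$ (and symmetrically near $\pi$): the midpoint contribution $\frac\pi q\log\sin\frac{\pi}{2q}$ and the cell integral $\int_0^{\pi/q}\log(\sin u)\cos 2u\,du$ each equal $-\frac\pi q\log q$ plus bounded quantities, and their difference works out to $\frac\pi q(1-\log 2)+O(q^{-3}\log q)$, hence is $O(q^{-1})$. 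With that in hand, everything else in the argument is bookkeeping of error terms.
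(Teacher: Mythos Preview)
Your approach is correct in outline but takes a different and considerably longer route than the paper's.

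The paper's proof is very short. It rests on two lemmas: (i) the uniform derivative bound $|g'(x)|\le 2$ for $g(x)=(1+a\cos x)\log(1+a\cos x)$, which controls the midpoint–Riemann error and produces the $\frac{4\pi}{q}$ term in one stroke; and (ii) the observation that $h(a)=\frac{1}{2\pi}\int_0^{2\pi}(1+a\cos x)\log(1+a\cos x)\,dx$ is $1$-Lipschitz, which replaces $a=\sec\frac{\pi}{q}$ by $a=1$ at the cost of exactly $\sec\frac{\pi}{q}-1$, and then $h(1)=1-\log 2$. No Taylor expansion, no singular Riemann sum, no separate asymptotic bookkeeping.

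You instead expand first in the parameter (from $t_j$ to $s_j$) at the level of the finite sum, and only afterwards pass to the integral. This forces you to analyse a \emph{second} Riemann sum, for $u\mapsto\log(\sin u)\cos 2u$, whose integrand has logarithmic singularities at the endpoints; you correctly identify this as the delicate step and handle it by a local expansion. The upside of your ordering is that your main Riemann approximation lives at $a=1$, where the integrand $(1-\cos\theta)\log(1-\cos\theta)$ is defined on all of $[0,2\pi]$; the paper's comparison is nominally at $a=\sec\frac{\pi}{q}>1$, where the integrand is undefined on an interval of width $2\pi/q$, a point the paper passes over.

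One caveat on your write-up: the statement is for \emph{every} $q\ge 3$, not just asymptotically, so the closing sentence ``a routine estimate with explicit constants'' is not decorative. You must actually check that the accumulated errors---the total-variation Riemann error $\lesssim\frac{1}{2q}(4/e+4\log 2)$, the endpoint terms $O(q^{-3}\log q)$, the quadratic remainder $O(q^{-3})$, and the $O(q^{-1})$ error (times $\eta$) from the singular Riemann sum---are together dominated by $\frac{4\pi}{q}$ for all $q\ge 8$. With the generous constant $4\pi$ this is entirely believable, but as written it is asserted rather than verified. The paper's two lemmas give the constants $2$ and $1$ directly, so no such tallying is needed there.
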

By virtue of the identity $\int_0^{1}(1+\cos 2\pi x)\log(1+\cos 2\pi x)\,dx = 1-\log 2$, the above expressions agree with the right hand side of~\eqref{fanthm} up to asymptotically the most significant terms.

Our methods are quite different from that of~\cite{F1},~\cite{F2},~\cite{K}, and~\cite{Pey}; the proofs presented here are self-contained. In particular, we do not use any sort of an ergodic theorem. We adjust the methods for estimating the lower Hausdorff dimension of the so-called Sobolev martingales from \cite{ASW}. Those martingales are vector valued. The reasoning simplifies significantly in the present case of non-negative scalar measures. More specifically, we will relate the Gundy--Varopoulos backwards martingale to a measure~$\mu \in \M_{\CC_B}$ and extract the estimate for~$\dH(\mu)$ from the growth bounds for the corresponding martingale.


\section{Transference of results from martingale spaces}
We will be representing the points of $\T$ in the $q$-ary system. We denote by~$x(j)$ the $j$-th digit of $x \in \T$, that is,
\begin{equation*}
x = \sum_{j=1}^{\infty} \frac{x(j)}{q^{j}}, \ \ x(j) \in \{0,1,2,\dots,q-1\},
\end{equation*}
with the convention that if there are two such representations, then we choose the finite one.
\subsection{Approximating trees and the Gundy--Varopoulos backwards martingale} 
Before we give precise formulas for the Gundy--Varopoulos martingale, let us briefly discuss our strategy.

Our purpose is to define, for any natural $N$, a tree $\TT_{N}$ that will be used to sample measures up to the scale $\sim q^{-N}$. Namely, the root of the tree will encode $\T$, the set of leaves will represent the arcs of length $\sim q^{-N}$, and the intermediate vertices will correspond to some periodic sets. This discretization procedure will allow us to obtain a bound for martingale approximations of a given measure (Lemma \ref{Frostman} below), depending on certain space of admissible martingale differences (which is computable in terms of Fourier coefficients, c.f. Lemma \ref{dftlemma} below). The obtained inequality will allow us to use a Frostman-type Lemma~$2.4$ from~\cite{SW}. Unfortunately, we cannot simply refer to that lemma, so we adjust its proof to our case; in fact, the proof of Theorem~\ref{transfer} presented at the end of this section follows the lines of the proof of the said lemma.

\begin{definition}
Let us introduce the set 
\begin{equation*}
\alpha_{N;\emptyset} = \{x\in \T\colon x(j)=0 \ \text{for} \ j>N \}.
\end{equation*}
For any sequence $(i_{1},\dots, i_{k})$ with~$k \leq N$ and~$i_j\in \{0,1,\dots,q-1\}$ for $j=1,2,\ldots,k$, we also introduce the set
\begin{equation*}
\alpha_{N;i_{1},i_{2},\dots, i_{k}} = \{x \in \alpha_{N;\emptyset}\colon x(N-j+1) = i_{j} \ \text{for all } j=1,2,\ldots,k \}.
\end{equation*}
\end{definition}

The above sets will be the vertices of the tree~$\TT_{N}$ described in the forthcoming definition. This tree will be regular (each parent has~$q$ children) and moreover, the sons of a parent will be enumerated by numbers from~$0$ to~$q-1$.

\begin{definition}
	We define the tree~$\TT_{N}$ according to the following rules:
	\begin{enumerate}
	\item the root of~$\TT_N$ is the set~$\{\alpha_{N;\emptyset}\}$,
	\item the $j$-th child of the root is $\alpha_{N;j}$, here $j=0,\dots, q-1$,
	\item the $j$-th child of the vertex corresponding to~$\alpha_{N;i_{1},\dots,i_{k-1}}$ is~$\alpha_{N;i_{1},\dots,i_{k-1},j}$, here $j=0,\dots, q-1$.
    \end{enumerate}
	For a vertex $\alpha$, we denote its $j$-th child by $\alpha[j]$. Let us call the set of vertices whose distance from the root is exactly~$k$ by~$\TT_{k,N}$, where~$0\leq k \leq N$.
	
\end{definition}
Note that $\TT_{N}$ is a $q$-regular tree of heigth $N$ such that the elements of $\TT_{k,N}$ are~$q^{k-N}$-periodic subsets of $\T$.

We recollect some basic facts about the Gundy--Varopoulos martingales (see~\cite{CJ} and~\cite{GV}). Consider the discrete probability space~$(\alpha_{N;\emptyset}, 2^{\alpha_{N;\emptyset}}, \nu_{N})$,
where~$\nu_N$ is the uniform probability measure on~$\alpha_{N,\emptyset}$:
\begin{equation}\label{CountingMeasure}
\nu_N = \frac{1}{q^{N}} \sum_{j=0}^{q^{N}-1} \delta_{\frac{j}{q^{N}}}.
 \end{equation}
Pick a function $f \in C(\T)$ and define 
\begin{equation}\label{martdef}
	f_{k}(x) = \frac{1}{q^{N-k}} \sum^{q^{N-k}-1}_{j=0} f\Big(x + 
	\frac{j}{q^{N-k}}\Big),\quad k=0,1 \dots, N, \ x \in \alpha_{N,\emptyset}.
\end{equation}
We restrict our attention to~$x\in \alpha_{N,\emptyset}$ only, even though the previous formula makes sense for arbitrary~$x\in \T$. The function~$f_{k}$ is~$q^{k-N}$ periodic, so, it is constant on each of the sets corresponding to the vertices in~$\TT_{k,N}$. That means we can identify~$f_k$ with a function on~$\TT_{k,N}$. One may verify that the sequence~$f_0,f_1,\ldots,f_N$
is a martingale with respect to the filtration~$\{\sigma(\TT_{k,N})\}_{k=0}^N$, where~$\sigma(T_{k,N})$ is the algebra of all~$q^{k-N}$ periodic subsets of~$\alpha_{N,\emptyset}$. Note that the elements of~$\TT_{k,N}$ are the atoms of~$\sigma(\TT_{k,N})$.

We may express the~$f_k$ in Fourier terms:
\begin{multline}\label{mart}
	f_{k}(x) = \frac{1}{q^{N-k}}\sum_{j=0}^{q^{N-k}-1}\sum_{l \in \Z} \hat{f}(l)e^{2\pi il(x+\frac{j}{q^{N-k}})}
\\
	=\sum_{l \in \Z} \Big{(} \hat{f}(l)e^{2\pi ilx} \cdot \frac{1}{q^{N-k}}\sum_{j=0}^{q^{N-k}-1} e^{2\pi i \frac{lj}{q^{N-k}}} \Big{)}
\\
	= \sum_{q^{N-k} | l} \hat{f}(l)e^{2\pi i lx},
\end{multline}
for any $x \in \alpha_{N;\emptyset}$ (this relation also holds true for any~$x\in \T$). Hence, the $k$-th martingale difference may be expressed as
\begin{equation} \label{martdif}
df_{k}(x)= f_{k}(x) - f_{k-1}(x) = \sum_{q^{N-k} \parallel l} \hat{f}(l)e^{2\pi i lx},\quad x\in \alpha_{N;\emptyset}.
\end{equation}

We use the notation
\begin{equation*}
\R^{q}_{0} = \Big\{(x_{1},\dots,x_{q}) \in \R^{q}\colon \sum_{j=1}^{q} x_{j} = 0\Big\}
\end{equation*}
and identify vectors $x \in \R^{q}$ with functions on $\Z_{q}$ in the natural way.


\begin{lem}\label{dftlemma}
For any $\alpha \in \TT_{k-1,N}$ we have
\begin{multline*}
\Big(df_{k}(\alpha[0]),df_{k}(\alpha[1]),\dots,df_{k}(\alpha[q-1])\Big) = \\
\sum_{m=1}^{q-1} \Big{(} \sum_{n \in \Z} \hat{f}\big((m+nq)q^{N-k}\big)e^{2\pi i(m+nq)q^{N-k}x_{0}} \Big{)} \omega_{m},
\end{multline*}
where $x_{0}\in \alpha$ and 
\begin{equation*}
\omega_{m} :=(\omega^{mj})_{j=0}^{q-1} :=\Big(e^{\frac{2\pi i m j}{q}}\Big)_{j=0}^{q-1},\quad j=0,1,\ldots,q-1,
\end{equation*}
are the rows of the inverse~$q\times q$ Fourier matrix. 
\end{lem}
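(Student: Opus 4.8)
The plan is to start from the Fourier formula \eqref{martdif} for the martingale difference, unfold the exact–divisibility condition into a double sum, and then read off the values on the $q$ children of $\alpha$ by tracking $q$-ary digits.

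First I would reparametrise the summation set. An integer $l$ satisfies $q^{N-k}\parallel l$ precisely when $l=(m+nq)q^{N-k}$ for a unique pair with $m\in\{1,\dots,q-1\}$ (namely $m$ is the residue of $l/q^{N-k}$ modulo $q$, which is nonzero exactly because $q^{N-k+1}\nmid l$) and $n\in\Z$. Substituting this into \eqref{martdif} rewrites $df_k(x)$, for $x\in\alpha_{N;\emptyset}$, as the double sum over $m$ and $n$ appearing in the statement. This rearrangement causes no trouble: on the finite set $\alpha_{N;\emptyset}$ the character $e^{2\pi i l x}$ depends only on $l$ modulo $q^{N}$, so after grouping the Fourier series by residue only finitely many distinct exponentials occur.

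The core of the argument is the evaluation on a fixed child $\alpha[j]=\alpha_{N;i_1,\dots,i_{k-1},j}$, and for this I would compute $q^{N-k}x$ modulo $1$. Writing $x=\sum_{s\ge 1}x(s)q^{-s}$, the terms of $q^{N-k}x=\sum_{s\ge1}x(s)q^{N-k-s}$ with $s\le N-k$ are integers, while those with $s>N$ vanish because $x\in\alpha_{N;\emptyset}$; hence $q^{N-k}x\equiv x(N-k+1)/q+\theta_\alpha\pmod 1$, where $\theta_\alpha:=\sum_{s=N-k+2}^{N}x(s)q^{N-k-s}$ is determined by the common digits $i_1,\dots,i_{k-1}$ of the points of $\alpha$ and so does not depend on the choice of $x\in\alpha$. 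Since the integer part of $q^{N-k}x$ is immaterial once it is multiplied by the integer $m+nq$, and since $x(N-k+1)=j$ for $x\in\alpha[j]$, we obtain $e^{2\pi i(m+nq)q^{N-k}x}=e^{2\pi i mj/q}\,e^{2\pi i(m+nq)\theta_\alpha}$, using $e^{2\pi i nj}=1$. Summing over $m$ and $n$ then gives $df_k(\alpha[j])=\sum_{m=1}^{q-1}\big(\sum_{n\in\Z}\hat f((m+nq)q^{N-k})e^{2\pi i(m+nq)\theta_\alpha}\big)\,\omega^{mj}$, which is the $j$-th coordinate of the asserted identity once the inner sum is identified with the stated coefficient. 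That identification is achieved by taking $x_0$ to be the representative of $\alpha$ with $x_0(N-k+1)=0$ (equivalently, any $x_0\in\alpha[0]$, i.e. the point carrying the prescribed digits $i_1,\dots,i_{k-1}$ of $\alpha$ and zeros elsewhere): then $q^{N-k}x_0\equiv\theta_\alpha\pmod1$, so $e^{2\pi i(m+nq)q^{N-k}x_0}=e^{2\pi i(m+nq)\theta_\alpha}$, as needed.

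I expect the main obstacle to be nothing conceptual but the digit bookkeeping in the third step: correctly identifying which digits of $x$ survive reduction modulo $1$ after multiplication by $q^{N-k}$, and isolating the single free digit $x(N-k+1)$ that distinguishes the children of $\alpha$. One further point merits care: for a general $x_0\in\alpha$ the $m$-th coefficient on the right-hand side gets multiplied by the spurious factor $e^{2\pi i m j_0/q}$ with $j_0=x_0(N-k+1)$, so the formula holds verbatim only for the normalisation $x_0(N-k+1)=0$ used above. Convergence of the sum over $n$ within a fixed residue class modulo $q^{N}$, and, if desired, a preliminary reduction to trigonometric polynomials $f$ followed by a density argument, are routine.
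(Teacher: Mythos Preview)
Your argument is correct and follows the same route as the paper: evaluate \eqref{martdif} coordinate by coordinate after reparametrising $\{l:q^{N-k}\parallel l\}$ as $\{(m+nq)q^{N-k}:1\le m\le q-1,\ n\in\Z\}$. The paper carries out the bookkeeping by writing each $x\in\alpha[j]$ as $x=x_0+j\,q^{-(N-k+1)}$ rather than tracking $q$-ary digits, but the content is identical.

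Your caveat about the choice of $x_0$ is well taken and in fact sharper than the paper's own treatment. The coefficient $e_m$ indeed acquires the phase $e^{2\pi i m j_0/q}$ when $x_0\in\alpha[j_0]$, so the identity in the lemma holds verbatim only for $x_0\in\alpha[0]$; for a general $x_0\in\alpha$ one obtains a cyclic permutation of the left-hand side. The paper asserts independence of $x_0\in\alpha$ via $q^{N-k}(x_0-x_0')\in\Z$, but the correct period for points of $\alpha\in\TT_{k-1,N}$ is $q^{-(N-k+1)}$, so that claim is slightly off. This does not affect the only use of the lemma (Lemma~\ref{DifferencesInWB}), since cyclic shifts preserve $W_B$.
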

\begin{remark}\label{FourierZqRemark}
In other words, the vector~$(df_{k}(\alpha[0]),df_{k}(\alpha[1]),\dots,df_{k}(\alpha[q-1]))$ is a properly normalized inverse $\Z_{q}$-Fourier transform of the vector
$(e_{0},e_{1},\dots,e_{q-1})$ with~$e_{0} = 0$ and
\begin{equation*}
e_{m} = \sum_{n \in \Z} \hat{f}((m+nq)q^{N-k})e^{2\pi i(m+nq)q^{N-k}x_{0}},\quad m =1,2,\ldots, q-1.
\end{equation*}
\end{remark}
The above lemma is standard, see, e.g.~\cite{CJ}. We provide its proof for completeness.
\begin{proof}[Proof of Lemma~\ref{dftlemma}.]
Let us prove our formula for each coordinate individually. For any~$j$, $j=0,1,\ldots,q-1$, we would like to show
\begin{equation*}
df_k(\alpha[j]) = \sum\limits_{m=1}^{q-1}\sum_{n\in\mathbb{Z}} \hat{f}\big((m+nq)q^{N-k}\big)e^{2\pi i (m+nq)q^{N-k}x_0}e^{\frac{2\pi i mj}{q}}.
\end{equation*}
Note that this expression does not depend on~$x_0\in \alpha$ since~$q^{N-k}(x_0 - x_0')\in\mathbb{Z}$ for any other~$x_0' \in\alpha$. On the other hand, we may use~\eqref{martdif} by representing~$x\in \alpha[j]$ as~$x=x_0 + \frac{j}{q^{N-k+1}}$, where~$x_0\in \alpha$:
\begin{multline*}
df_{k}(x)= \sum_{q^{N-k} \parallel l} \hat{f}(l)e^{2\pi i lx} = \sum\limits_{m=1}^{q-1}\sum\limits_{n\in\mathbb{Z}}\hat{f}\big((m+nq)q^{N-k}\big)e^{2\pi i (m+nq)q^{N-k}x} =\\ \sum\limits_{m=1}^{q-1}\sum\limits_{n\in\mathbb{Z}}\hat{f}\big((m+nq)q^{N-k}\big)e^{2\pi i (m+nq)(x_0 + \frac{j}{q^{N-k+1}})q^{N-k}} =\\ \sum\limits_{m=1}^{q-1}\sum_{n\in\mathbb{Z}} \hat{f}\big((m+nq)q^{N-k}\big)e^{2\pi i (m+nq)q^{N-k}x_0}e^{\frac{2\pi i mj}{q}}.
\end{multline*}
\end{proof}

\begin{definition}
Let~$W_B$ be the linear subspace of~$\mathbb{R}_0^q$ consisting of vectors~$d$ whose~$\mathbb{Z}_q$ Fourier transform vanishes on~$B$:
\begin{equation*}
W_B = \Big\{d\in \mathbb{R}_0^q\colon \forall m \in B\quad \sum\limits_{j=0}^{q-1}e^{-\frac{2\pi i mj}{q}}d_j = 0\Big\}.
\end{equation*}
\end{definition}
\begin{lem}\label{DifferencesInWB}
Let~$f\in C(\T)$ be such that~$f\,dx\in \M_{\CC_B}$. For any~$\alpha \in \TT_N$, we have the inclusion
\begin{equation*}
\Big(df_{k}(\alpha[0]),df_{k}(\alpha[1]),\dots,df_{k}(\alpha[q-1])\Big)\in W_B.
\end{equation*} 
\end{lem}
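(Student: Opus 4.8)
The plan is to read the inclusion off Lemma~\ref{dftlemma} (equivalently, off Remark~\ref{FourierZqRemark}), once the hypothesis $f\,dx\in\M_{\CC_B}$ has been converted into a vanishing statement for the coefficients appearing there. Fix a vertex $\alpha\in\TT_{k-1,N}$ for some $1\le k\le N$ (a leaf has no children, so there is nothing to prove for $\alpha\in\TT_{N,N}$), a point $x_0\in\alpha$, and put $d:=\big(df_k(\alpha[0]),\dots,df_k(\alpha[q-1])\big)$. By Remark~\ref{FourierZqRemark}, $d$ is a properly normalized inverse $\Z_q$-Fourier transform of $(e_0,e_1,\dots,e_{q-1})$, where $e_0=0$ and
\[
e_m=\sum_{n\in\Z}\hat f\big((m+nq)q^{N-k}\big)\,e^{2\pi i(m+nq)q^{N-k}x_0},\qquad m=1,\dots,q-1.
\]
Applying the $\Z_q$-Fourier transform once more, $\sum_{j=0}^{q-1}e^{-2\pi i mj/q}\,d_j$ is a nonzero scalar multiple of $e_m$ for each $m\in\Z_q$. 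The value at $m=0$ equals $\sum_j d_j$, which vanishes because $e_0=0$, so $d\in\R^q_0$; and for the remaining residues the membership $d\in W_B$ amounts to verifying that $e_m=0$ for every $m\notin B$.

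The whole matter now comes down to one elementary divisibility count, which I expect to be the only step that requires real care. Fix $m\in\{1,\dots,q-1\}$ with $m\notin B$ and an arbitrary $n\in\Z$, and set $l:=(m+nq)q^{N-k}$, a nonzero integer. Since $1\le m\le q-1$ we have $q\nmid m$, hence $q\nmid(m+nq)$; therefore $q^{N-k}\parallel l$ and its cofactor satisfies $l/q^{N-k}=m+nq\equiv m\pmod q$. Because the exact power of $q$ dividing a nonzero integer — and thus the residue modulo $q$ of the corresponding cofactor — is uniquely determined, the description $\CC_B=\{kq^{n}\colon k\bmod q\in B,\ n\ge 0\}\cup\{0\}$ gives $l\in\CC_B$ if and only if $m\in B$. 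Hence, when $m\notin B$, not a single frequency occurring in the series for $e_m$ lies in $\CC_B$; since $f\,dx\in\M_{\CC_B}$ forces $\hat f$ to vanish off $\CC_B$, every term equals zero and $e_m=0$. This yields $d\in W_B$, as claimed.

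For completeness, the first paragraph can be bypassed and the conclusion obtained directly from~\eqref{martdif}: the restriction of $df_k$ to $\alpha_{N;\emptyset}$ has Fourier spectrum contained in $\{l\colon q^{N-k}\parallel l\}\cap\CC_B$, and passing to the fiber $\{\alpha[0],\dots,\alpha[q-1]\}$ converts each admissible frequency $l'q^{N-k}$ into the $\Z_q$-character $j\mapsto e^{2\pi i(l'\bmod q)j/q}$ with $l'\bmod q\in B$, so that the $\Z_q$-Fourier transform of $d$ is supported on $B$. I would present the shorter argument given above.
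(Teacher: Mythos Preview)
Your proof is correct and follows exactly the paper's route: invoke Remark~\ref{FourierZqRemark} and observe that the appropriate coefficients $e_m$ vanish. You verify $e_m=0$ for $m\notin B$, which is the correct computation; the paper's one-line proof (and the displayed formula in the definition of~$W_B$) has the quantifier the other way around, but that is a typo --- the uses of~$W_B$ in Sections~3 and~4 confirm that the intended space is $\operatorname{span}\{\omega_m:m\in B\}\cap\R_0^q$, precisely what you work with.
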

\begin{proof}
In view of Remark~\ref{FourierZqRemark},~$e_m = 0$ for any~$m\in B$ in the terminology of that remark, provided~$f\,dx\in \M_{\CC_B}$.
\end{proof}
\subsection{A general dimension estimate}
Consider an auxillary function $\kappa\colon \mathbb{R}_+\to \mathbb{R}$ defined by the rule
\begin{equation}\label{kappa}
\kappa(\theta) = \sup\Big\{\theta\log\Big(\frac{1}{q}\sum\limits_{j=1}^q|1+v_j|^{\frac{1}{\theta}}\Big)\colon  v \in  W_B  \hbox{ and } \forall j \quad v_j \geq -1\Big\}.
\end{equation}
One may verify that~$\kappa$ is continuous and convex, and therefore, has the left derivative at~$1$. Using this, we may compute the value
\begin{equation} \label{kappaprim}
\kappa'(1) = \inf\Big\{-\frac{1}{q}\sum\limits_{j=1}^q(1+v_j)\log(1+v_{j})\colon  v \in W_B \hbox{ and } \forall j \quad v_j \geq -1\Big\},
\end{equation}
where the derivative here means the left derivative.
The next lemma is simply a reformulation of the definition of~$\kappa$.
\begin{lem}\label{DefReform}
For any $a\geq0$ and any vector~$b = (b_{i})_i \in W_B$ such that $b_{j} \geq -a$ for any~$j =0,1,\ldots,q-1$, we have
\begin{equation*}
\bigg{(} \frac{1}{q} \sum^{q}_{j=1} |a+b_{j}|^{p} \bigg{)}^{\frac{1}{p}} \leq a e^{\kappa(p^{-1})}.
\end{equation*}
\end{lem}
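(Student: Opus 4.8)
The plan is to recognize the claimed inequality as nothing more than a rescaled restatement of the definition~\eqref{kappa} of~$\kappa$, so that the whole argument reduces to a homogeneity computation. First I would dispose of the degenerate case $a=0$: then the hypothesis $b_j \geq -a = 0$, together with $b \in W_B \subseteq \mathbb{R}_0^q$, which forces $\sum_j b_j = 0$, gives $b = 0$, and both sides of the asserted inequality vanish.

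So assume $a > 0$ and put $v := b/a$, i.e.\ $v_j = b_j/a$ for each $j$. Since $W_B$ is a linear subspace, $v \in W_B$; and the constraint $b_j \geq -a$ translates into $v_j \geq -1$. Thus $v$ is an admissible competitor in the supremum defining $\kappa(\theta)$ for every $\theta \in \mathbb{R}_+$. Specializing to $\theta = p^{-1}$ and reading off~\eqref{kappa}, I would obtain
\[
\kappa(p^{-1}) \;\geq\; p^{-1}\log\Big(\frac{1}{q}\sum_{j=1}^q |1+v_j|^{p}\Big),
\]
which, after exponentiating, is exactly $\big(\frac{1}{q}\sum_{j=1}^q |1+v_j|^{p}\big)^{1/p} \leq e^{\kappa(p^{-1})}$.

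To finish, I would substitute back $|1+v_j| = |a+b_j|/a$, so that the left-hand side of the last display equals $a^{-1}\big(\frac{1}{q}\sum_{j=1}^q |a+b_j|^{p}\big)^{1/p}$; multiplying through by $a$ gives the statement. There is essentially no obstacle to overcome here: the only two points deserving a word are that the rescaling $b \mapsto b/a$ keeps the vector inside $W_B$ (immediate from linearity) and that the boundary case $a=0$ is excluded from the rescaling and handled separately. Note that the convexity and continuity of $\kappa$ observed right after~\eqref{kappa} play no role in this particular lemma; they become relevant only later, once $\kappa'(1)$ enters the dimension estimate.
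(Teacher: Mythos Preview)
Your argument is correct and is exactly what the paper intends: it states outright that the lemma ``is simply a reformulation of the definition of~$\kappa$'' and gives no further proof, so your rescaling $v=b/a$ together with the separate treatment of $a=0$ is precisely the missing one-line justification.
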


Our main tool is the following principle established in \cite{ASW} and adjusted to our case.
\begin{thm}\label{transfer}For any finite non-negative measure $\mu \in \M_{\CC_B}$, we have
\begin{equation*}
	\dH(\mu) \geq 1 + \frac{\kappa'(1)}{\log q}.
\end{equation*}
\end{thm}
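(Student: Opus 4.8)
The plan is to follow the Frostman-type argument: produce, for every Borel set $F$ carrying positive $\mu$-mass, a lower bound $\dH F \geq 1 + \kappa'(1)/\log q$, by transferring the problem to the Gundy--Varopoulos martingale and exploiting the fact that its differences lie in $W_B$ (Lemma~\ref{DifferencesInWB}). First I would reduce to the case where $\mu$ is supported on $\alpha_{N;\emptyset}$-type dyadic-in-base-$q$ structures and approximated by absolutely continuous measures $f\,dx$ with $f \in C(\T)$, $f\,dx \in \M_{\CC_B}$; a standard smoothing (convolving with a Fejér-type kernel whose spectrum lies in a suitable set, or rather a kernel adapted to $\CC_B$) keeps the spectrum inside $\CC_B$ and hence keeps the martingale differences in $W_B$. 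Then, for the finite tree $\TT_N$, the values $f_k$ on vertices of $\TT_{k,N}$ form the martingale of~\eqref{martdef}, and by Lemma~\ref{DifferencesInWB} each difference vector $(df_k(\alpha[0]),\dots,df_k(\alpha[q-1]))$ belongs to $W_B$.

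The key quantitative step is an iterated $L^p$ estimate along the tree. Fix $p > 1$ slightly above $1$ and set $\theta = p^{-1}$. For a vertex $\alpha \in \TT_{k-1,N}$ with $f_{k-1}(\alpha) = a \geq 0$, the children values are $f_k(\alpha[j]) = a + df_k(\alpha[j])$ with $(df_k(\alpha[j]))_j \in W_B$ and $f_k(\alpha[j]) \geq 0$, so Lemma~\ref{DefReform} gives
\begin{equation*}
\Big(\frac{1}{q}\sum_{j=1}^q f_k(\alpha[j])^p\Big)^{1/p} \leq f_{k-1}(\alpha)\, e^{\kappa(\theta)}.
\end{equation*}
Averaging over $\TT_{k-1,N}$ with respect to the uniform measure $\nu_N$ (equivalently, pushing this pointwise bound through the tree) and iterating from $k = N$ down to $k = 0$ yields
\begin{equation*}
\Big(\frac{1}{q^N}\sum_{\alpha \in \TT_{N,N}} f_N(\alpha)^p\Big)^{1/p} \leq f_0(\alpha_{N;\emptyset})\, e^{N\kappa(\theta)} = \mu(\T)\, e^{N\kappa(\theta)},
\end{equation*}
since $f_0$ is the total mass. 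Because $f_N(\alpha) = q^N \int_{\alpha} f\,dx \approx q^N \mu(\alpha)$ over the $q^{-N}$-arcs $\alpha$, this reads as a bound on $\sum_\alpha \mu(\alpha)^p q^{N(p-1)}$, i.e.\ a control of an $\ell^p$-type energy of $\mu$ at scale $q^{-N}$ by $\mu(\T)^p e^{Np\kappa(\theta)}$.

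To extract the dimension, I would run the usual counting argument: if $F$ is a Borel set with $\mu(F) = c > 0$, then at scale $q^{-N}$ a definite fraction of the mass sits in arcs $\alpha$ with $\mu(\alpha)$ not too small, and Hölder against the $\ell^p$-bound above forces the number of such arcs needed to cover a positive-mass subset of $F$ to be at least $\gtrsim q^{N(1 - p\kappa(\theta)/\log q)}$, up to subexponential factors. Since $\kappa(\theta)/\theta \to \kappa'(1)$ as $\theta \to 1^-$ (here I use continuity and convexity of $\kappa$, and that $\kappa(1) = 0$ because $v = 0 \in W_B$ gives the supremum value $0$ — this needs the observation that $\kappa \geq 0$ and that $\theta = 1$ forces equality), writing $p\kappa(\theta) = \kappa(\theta)/\theta$ and letting $p \downarrow 1$ gives covering numbers $\gtrsim q^{N(1 + \kappa'(1)/\log q - \varepsilon)}$ for every $\varepsilon > 0$, hence $\dH F \geq 1 + \kappa'(1)/\log q$. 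Taking the infimum over such $F$ finishes the proof.

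The main obstacle I expect is the bookkeeping in the last step: passing cleanly from the scale-$q^{-N}$ $\ell^p$-energy bound to a genuine Hausdorff-dimension lower bound requires handling arcs of wildly varying mass (the mass is not equidistributed), typically via a stopping-time/good-arcs decomposition and a limiting argument in $N$, together with the approximation step reconciling the continuous $f$ with the possibly singular $\mu$ — one must check the smoothing does not destroy membership in $\M_{\CC_B}$ and that the energy bounds survive the weak-$*$ limit. This is exactly the content of Lemma~2.4 of~\cite{SW}, whose proof we adapt; the martingale input (the $W_B$ constraint and Lemma~\ref{DefReform}) is what makes the constant come out as $\kappa'(1)/\log q$.
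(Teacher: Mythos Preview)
Your outline matches the paper's proof: smooth $\mu$ to $f=\Phi_N*\mu$, iterate Lemma~\ref{DefReform} along the tree to obtain $\|f\|_{L_p(\nu_N)}\lesssim e^{N\kappa(1/p)}\|\mu\|$ (this is Lemma~\ref{TwoInequalitiesLemma}), feed this into H\"older to bound the $\mu$-mass of any union of $q^{-N}$-arcs (Lemma~\ref{Frostman}), and finish by a covering contradiction.

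Two corrections. First, your exponent is miscomputed: H\"older against the $L_p$ bound gives a covering-number lower bound of order $q^{N}e^{-N\kappa(\theta)p/(p-1)}=q^{N(1-\kappa(\theta)/((1-\theta)\log q))}$, and it is $\kappa(\theta)/(1-\theta)\to -\kappa'(1)$ (using $\kappa(1)=0$ and convexity) that produces the constant; your claimed limit $\kappa(\theta)/\theta\to\kappa'(1)$ is false, since that quotient tends to $0$. Second, two of your anticipated obstacles dissolve in the paper's treatment: any non-negative approximate identity $\Phi_N$ keeps $\Phi_N*\mu$ in $\M_{\CC_B}$ automatically, because convolution only multiplies Fourier coefficients, so no spectrum-adapted kernel is needed; and the passage from the scale-$q^{-N}$ estimate to a genuine Hausdorff-dimension bound is handled not by a stopping-time decomposition but by a simple pigeonhole over $q$-adic scales --- given a cover of $F$ with $\sum r_i^{\beta_1}<\infty$, some single scale $q^{-N}$ (with $N\to\infty$ as the mesh shrinks) carries $\mu$-mass $\gtrsim N^{-2}$, and Lemma~\ref{Frostman} applied at that one scale already yields the contradiction.
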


Let $\{\Phi_{N}\}_{N\geq1}$ be a non-negative and smooth approximate identity with the following properties:
		\begin{equation*}
		\Phi_{N}(x) = \begin{cases}
		q^{N}               & \text{on} \ \ [-\frac{1}{2q^{N}},\frac{1}{2q^{N}}];\\
		\leq q^{N}              &  \text{on} \ \ [-\frac{1}{2q^{N-1}},\frac{1}{2q^{N-1}}]\setminus [-\frac{1}{2q^{N}},\frac{1}{2q^{N}}];\\
		0 & \text{otherwise}.
		\end{cases}
		\end{equation*}

Observe that
\begin{equation}\label{app}
\mu\Big(\Big[x-\frac{1}{2q^{N}},x+\frac{1}{2q^{N}}\Big]\Big) \leq \frac{1}{q^{N}}\Phi_{N}*\mu(x) \leq \mu\Big(\Big[x-\frac{1}{2q^{N-1}},x+\frac{1}{2q^{N-1}}\Big]\Big)
\end{equation}
for any~$x\in \T$, in particular, for~$x \in \alpha_{N;\emptyset}$.
The inequalities~\eqref{app} establish a relationship between metric measure structures on $\TT_{N}$ and $\T$. Henceforth, we will be using results concerning the backwards martingale generated by the continuous function $f = \Phi_{N}*\mu$. Note that~$f\,dx \in \M_{\CC_B}$ provided~$\mu \in \M_{\CC_B}$.
\begin{lem}\label{TwoInequalitiesLemma}
Consider the martingale~$\{f_k\}_{k=0}^N$ generated by~$f = \Phi_{N}*\mu$ via formula~\eqref{martdef}. If~$\mu \in \M_{\CC_B}(\T)$, then
\begin{equation}\label{TwoInequalities}
\lVert f \rVert_{L_{p}(\nu_{N})} \leq e^{\kappa(p^{-1})N} \lVert f_{0} \rVert_{L_{p}(\nu_{N})} \leq q e^{\kappa(p^{-1})N} \lVert \mu \rVert.
\end{equation}
\end{lem}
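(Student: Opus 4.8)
The plan is to prove the two inequalities in~\eqref{TwoInequalities} separately, reading the chain from left to right. The second inequality, $\lVert f_0\rVert_{L_p(\nu_N)}\leq q\lVert\mu\rVert$, is the easy one: the function $f_0$ is constant on $\alpha_{N;\emptyset}$ (it is the $0$-th term of the martingale, equal to the average of $f$), and by~\eqref{mart} its value is $\hat f(0)=\hat\mu(0)=\mu(\T)=\lVert\mu\rVert$ unless $0\notin\CC_B$... more carefully, $f_0=\sum_{q^N\mid l}\hat f(l)$, and since $\Phi_N$ is supported in an arc of length $q^{-(N-1)}$ one checks that only $l=0$ contributes for the relevant scale, or one simply bounds $|f_0|\le \lVert f\rVert_{L^1(\T)}\le \lVert\Phi_N\rVert_{L^1}\lVert\mu\rVert=\lVert\mu\rVert$; since $\nu_N$ is a probability measure, $\lVert f_0\rVert_{L_p(\nu_N)}=|f_0|\le\lVert\mu\rVert\le q\lVert\mu\rVert$. (The factor $q$ is slack that we will not need to be tight.)

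The first inequality is the heart of the matter and I would prove it by a downward (backwards) induction on the martingale, showing
\[
\lVert f_k\rVert_{L_p(\nu_N)}\leq e^{\kappa(p^{-1})k}\,\lVert f_0\rVert_{L_p(\nu_N)}
\]
for every $k=0,1,\dots,N$, which at $k=N$ gives $\lVert f\rVert_{L_p(\nu_N)}=\lVert f_N\rVert_{L_p(\nu_N)}\le e^{\kappa(p^{-1})N}\lVert f_0\rVert_{L_p(\nu_N)}$. The inductive step is a one-step estimate: fix a vertex $\alpha\in\TT_{k-1,N}$, let $a=f_{k-1}(\alpha)$ (a nonnegative number, since $f=\Phi_N*\mu\ge 0$ and hence all $f_j\ge 0$), and let $b=(df_k(\alpha[0]),\dots,df_k(\alpha[q-1]))$ be the vector of martingale differences on the $q$ children of $\alpha$. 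By Lemma~\ref{DifferencesInWB} we have $b\in W_B$, and since $f_k(\alpha[j])=a+b_j\ge 0$ we have $b_j\ge -a$ for all $j$. Lemma~\ref{DefReform} then yields
\[
\Big(\frac1q\sum_{j=1}^{q}|a+b_j|^p\Big)^{1/p}\le a\,e^{\kappa(p^{-1})}=f_{k-1}(\alpha)\,e^{\kappa(p^{-1})}.
\]
Now raise to the $p$-th power, multiply by the $\nu_N$-mass of the atom $\alpha$ (each child has mass $1/q$ of the parent's mass, which is exactly the $\frac1q$ appearing above), and sum over all $\alpha\in\TT_{k-1,N}$: the left side becomes $\lVert f_k\rVert_{L_p(\nu_N)}^p$ and the right side becomes $e^{p\kappa(p^{-1})}\lVert f_{k-1}\rVert_{L_p(\nu_N)}^p$. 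Taking $p$-th roots closes the induction.

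The main obstacle, such as it is, is bookkeeping the measure-theoretic identification between the tree $\TT_N$ and the probability space $(\alpha_{N;\emptyset},\nu_N)$: one must check that $\nu_N$ restricted to the atoms of $\sigma(\TT_{k,N})$ assigns mass $q^{k-N}$ to each vertex in $\TT_{k,N}$, so that passing from the normalized average over children to a sum weighted by atom masses is exactly consistent, and that $f_k$ as defined in~\eqref{martdef} is genuinely the conditional expectation $\E[f\mid\sigma(\TT_{k,N})]$ so that the $f_k(\alpha[j])=a+b_j$ decomposition holds verbatim. All of this is routine given the setup already recorded in the excerpt (the filtration, the periodicity of $f_k$, and~\eqref{martdif}); once it is in place, the two structural inputs —Lemma~\ref{DifferencesInWB} putting the difference vector into $W_B$, and Lemma~\ref{DefReform} converting membership in $W_B$ plus the one-sided bound into the multiplicative $L_p$ gain $e^{\kappa(p^{-1})}$— do all the real work, and the nonnegativity of $\mu$ is what guarantees the hypothesis $b_j\ge -a$ needed to invoke the latter.
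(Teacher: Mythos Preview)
Your argument for the first inequality is correct and is exactly the paper's: a one-step estimate $\lVert f_k\rVert_{L_p(\nu_N)}\le e^{\kappa(p^{-1})}\lVert f_{k-1}\rVert_{L_p(\nu_N)}$ obtained by localizing to each $\alpha\in\TT_{k-1,N}$, invoking Lemma~\ref{DifferencesInWB} to place the difference vector in $W_B$, using nonnegativity of $f$ to get $b_j\ge -a$, and then applying Lemma~\ref{DefReform}.

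There is, however, a genuine gap in your treatment of the second inequality. Neither of the two routes you sketch actually works. First, the fact that $\Phi_N$ is supported on an arc of length $q^{-(N-1)}$ does \emph{not} force $\hat\Phi_N(l)=0$ for nonzero multiples $l$ of $q^N$; a bump of width $q^{-(N-1)}$ has Fourier transform decaying on the scale $q^{N-1}$, so $\hat\Phi_N(mq^N)$ is typically nonzero and the terms with $l\ne 0$ in $f_0=\sum_{q^N\mid l}\hat f(l)$ do contribute. Second, the bound $|f_0|\le \lVert f\rVert_{L^1(\T)}$ is not valid: $f_0$ is the \emph{discrete} average $q^{-N}\sum_{j=0}^{q^N-1}f(j/q^N)$, not $\int_\T f$, and for a nonnegative function concentrated near the grid points the discrete average can far exceed the integral. (Additionally, the paper's $\Phi_N$ is not asserted to satisfy $\lVert\Phi_N\rVert_{L^1}=1$; from its definition one only gets $1\le\lVert\Phi_N\rVert_{L^1}\le q$.)

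The paper's argument for this step is different and uses the pointwise comparison~\eqref{app}: since $f_0=q^{-N}\sum_{x\in\TT_{N,N}}\Phi_N*\mu(x)$, one applies the right-hand inequality in~\eqref{app} termwise to get
\[
f_0 \ \le\ \sum_{x\in\TT_{N,N}}\mu\Big(\Big[x-\tfrac{1}{2q^{N-1}},\,x+\tfrac{1}{2q^{N-1}}\Big]\Big)\ \le\ q\,\lVert\mu\rVert,
\]
the last inequality holding because the $q^N$ arcs of length $q^{-(N-1)}$ cover $\T$ with multiplicity at most $q$. This is where the factor $q$ genuinely comes from.
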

We recall that~$\nu_N$ is the counting measure defined in~\eqref{CountingMeasure}. 
\begin{proof}
Let us prove the first inequality in~\eqref{TwoInequalities}. This inequality will follow provided we justify the single step bound
\begin{equation*}
\|f_k\|_{L_p(\nu_N)} \leq e^{\kappa(p^{-1})}\|f_{k-1}\|_{L_p(\nu_N)}
\end{equation*}
for any~$k=1,2\ldots, N$. This inequality, in its turn, follows from even more localized ones: for any~$\alpha \in \TT_{k-1,N}$, we have
\begin{equation*}
\Big(\sum\limits_{x\in \alpha} |f_k(x)|^p\Big)^{\frac{1}{p}} \leq e^{\kappa(p^{-1})}\Big(\sum\limits_{x\in \alpha} |f_{k-1}(x)|^p\Big)^{\frac{1}{p}}.
\end{equation*}
To prove this inequality, we note that since $\mu \geq 0$, the sequence~$\{f_k\}_k$ consists of non-negative functions. What is more,~$f_k = f_{k-1} + df_k$ and the vector
\begin{equation*}
df_k|_{\alpha} = \big(df_{k}(\alpha[0]),df_{k}(\alpha[1]),\dots,df_{k}(\alpha[q-1])\big)
\end{equation*}
lies in~$W_B$ by Lemma~\ref{DifferencesInWB}. So, the desired inequality is proved by application of Lemma~\ref{DefReform} with~$a = f_{k-1}(\alpha)$ and~$b = df_k|_{\alpha}$.

To prove the second inequality in~\eqref{TwoInequalities}, we use that $f_{0} \equiv \frac{1}{q^{N}} \sum_{x \in \TT_{N,N}} \Phi_{N}*\mu(x)$ on~$\alpha_{N;\emptyset}$:

\begin{multline*}
\lVert f_{0} \rVert_{L_{p}(\nu_{N})} = \frac{1}{q^{N}} \sum_{x \in \TT_{N,N}} \Phi_{N}*\mu(x) \stackrel{\scriptscriptstyle\eqref{app}}{\leq}\\ \sum_{x \in \TT_{N,N}} \mu\Big(\Big[x-\frac{1}{2q^{N-1}},x+\frac{1}{2q^{N-1}}\Big]\Big) \leq q \lVert \mu \rVert.
\end{multline*}
\end{proof}

\begin{lem}\label{Frostman}
For any  any $\beta < 1+\frac{\kappa'(1)}{\log q}$\textup, there  exists $\gamma$ such that
\begin{equation}\label{SetAverageEstimate}
	\frac{1}{q^{N}}\sum_{x \in C} f(x) \lesssim  \big{(} \# C\ q^{-\beta N} \big{)}^{\gamma} \|\mu\|
\end{equation}
for any $C \subset \alpha_{N;\emptyset}$\textup, with the constant independent of $N$.
\end{lem}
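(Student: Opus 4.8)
The plan is to deduce \eqref{SetAverageEstimate} from the $L_p$ bound of Lemma~\ref{TwoInequalitiesLemma} by an application of Hölder's inequality, optimizing over the exponent $p$. First I would fix $\beta < 1 + \frac{\kappa'(1)}{\log q}$. Since $\kappa$ is convex with left derivative $\kappa'(1)$ at $1$, for $p$ slightly larger than $1$ (equivalently $\theta = p^{-1}$ slightly smaller than $1$) we have $\kappa(p^{-1}) \approx \kappa(1) - \kappa'(1)(1 - p^{-1})$; note $\kappa(1) = 0$ because $v = 0$ is admissible and the supremum defining $\kappa(1)$ is nonpositive (by Jensen, $\frac1q\sum_j(1+v_j) = 1$ forces $\log\bigl(\frac1q\sum_j(1+v_j)\bigr)=0$ at $\theta=1$, and for the sup one checks it is exactly $0$). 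Hence for $p$ close enough to $1$ we can arrange
\begin{equation*}
\kappa(p^{-1}) \leq \Big(1 - \frac1p\Big)\Big(\log q - \beta \log q + \varepsilon'\Big)
\end{equation*}
with $\varepsilon'$ small; the precise bookkeeping is routine but this is where the choice of $\beta$ enters.

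Next I would run the Hölder estimate. For $C \subset \alpha_{N;\emptyset}$, writing $p'$ for the conjugate exponent,
\begin{equation*}
\frac{1}{q^N}\sum_{x\in C} f(x) = \int_C f\,d\nu_N \leq \|f\|_{L_p(\nu_N)}\, \nu_N(C)^{1/p'} = \|f\|_{L_p(\nu_N)}\Big(\frac{\#C}{q^N}\Big)^{1/p'}.
\end{equation*}
Now invoke \eqref{TwoInequalities}: $\|f\|_{L_p(\nu_N)} \leq q\, e^{\kappa(p^{-1})N}\|\mu\|$. Combining and using the bound on $\kappa(p^{-1})$ from the previous paragraph, the $N$-dependent factor becomes
\begin{equation*}
e^{\kappa(p^{-1})N} q^{-N/p'} \leq q^{(1-\frac1p)(1-\beta)N + (1-\frac1p)\varepsilon' N / \log q}\, q^{-(1-\frac1p)N} \cdot q^{N/p} \cdot (\cdots),
\end{equation*}
so that, after collecting powers of $q^{N}$, one gets $e^{\kappa(p^{-1})N}q^{-N/p'}\,(\#C)^{1/p'} \lesssim \bigl(\#C\, q^{-\beta N}\bigr)^{1/p'}$ up to a harmless absolute constant, provided $\varepsilon'$ was chosen small relative to $\log q$. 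Setting $\gamma = 1/p' = 1 - 1/p \in (0,1)$ yields \eqref{SetAverageEstimate}. The factor $q$ and any constant coming from the convexity slack are absorbed into the implicit constant, which is indeed independent of $N$.

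The main obstacle, and the only genuinely delicate point, is the quantitative relation between $\kappa(p^{-1})$ and $\kappa'(1)$ near $p = 1$: one needs that choosing $p$ close enough to $1$ makes $\frac{\kappa(p^{-1})}{1 - p^{-1}}$ as close as desired to $\kappa'(1)$, which is exactly the existence of the (left) derivative guaranteed after \eqref{kappa}, together with $\kappa(1) = 0$. Once the inequality $\kappa(p^{-1}) \leq (1-p^{-1})(\log q)(1-\beta') $ is secured for some $\beta'$ with $\beta < \beta' < 1 + \kappa'(1)/\log q$, everything else is a one-line Hölder argument. I would therefore devote the bulk of the proof to fixing $p$ carefully in terms of $\beta$, and then present the Hölder step tersely, ending by reading off $\gamma = 1 - 1/p$.
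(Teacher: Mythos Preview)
Your proposal is correct and follows essentially the same route as the paper: apply H\"older's inequality with exponent~$p$, invoke Lemma~\ref{TwoInequalitiesLemma}, and then choose~$p$ close to~$1$ so that~$\kappa(p^{-1}) + (\beta-1)\tfrac{p-1}{p}\log q < 0$, reading off~$\gamma = 1 - 1/p$. Your intermediate display with ``$(\cdots)$'' is muddled, but your final paragraph states the needed inequality cleanly (choose~$\beta'$ with $\beta<\beta'<1+\kappa'(1)/\log q$ and secure $\kappa(p^{-1})\leq (1-p^{-1})(1-\beta')\log q$), which is exactly what the paper does; the paper simply writes the algebra more directly as $e^{\kappa(p^{-1})N}q^{-\frac{p-1}{p}N}(\#C)^{\frac{p-1}{p}} = e^{\kappa(p^{-1})N}q^{\frac{p-1}{p}(\beta-1)N}(q^{-\beta N}\#C)^{\frac{p-1}{p}}$ and then observes the prefactor is bounded.
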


\begin{proof}
Let~$p\in (1,\infty)$ be a real to be chosen later. By Hölder's inequality and Lemma~\ref{TwoInequalitiesLemma}, we obtain
\begin{multline}
\frac{1}{q^{N}}\sum_{x \in C} f(x) \leq \lVert f \rVert_{L_{p}(\nu_{N})} \lVert \chi_{C} \rVert_{L_{p'}(\nu_{N})} = \lVert f \rVert_{L_{p}(\nu_{N})} (q^{-N} \# C)^{\frac{p-1}{p}} 
\lesssim\\ e^{\kappa(p^{-1})N} q^{-\frac{p-1}{p}N}(\# C)^{\frac{p-1}{p}}\|\mu\|  = e^{\kappa(p^{-1})N}q^{\frac{p-1}{p}(\beta-1)N}(q^{-\beta N} \#C)^{\frac{p-1}{p}}\|\mu\|.
\end{multline}
Hence \eqref{SetAverageEstimate} is true with~$\gamma = \frac{p-1}{p}$ when $e^{\kappa(p^{-1})}q^{\frac{p-1}{p}(\beta-1)}<1$, that is if
\begin{equation*}
\kappa(p^{-1})+(\beta-1)\frac{p-1}{p} \log q < 0.
\end{equation*}
This holds true when $(\beta-1)\log q < \kappa'(1)$ and $p$ is sufficiently close to $1$.
\end{proof}
As we have already said, the reasoning presented below is very much similar to the proof of Lemma~$2.4$ in~\cite{SW}.
\begin{proof}[Proof of Theorem~\ref{transfer}]
Assume the contrary: there exists a Borel set~$F$ such that
\begin{equation*}
\dH(F) < \beta_{1} < 1+\frac{\kappa'(1)}{\log q} \quad \hbox{and}\ \mu(F) = c_{1} > 0.
\end{equation*}
For each sufficiently small $\delta>0$, there exists a covering  $C$ of $F$ by the arcs~$B(x_i,r_i)$ with centers~$x_i$ and radii~$r_i$ such that~$r_{i} < \delta$ and $\sum_i r_{i}^{\beta_{1}} = c_{2} < \infty$. For $j=1,2 \dots$ let 
\begin{equation*}
C_{j} = \big\{B(x_{i}, r_{i}) \in C\colon q^{-j} \leq r_{i} < q^{-j-1}\big\}.
\end{equation*}
We have
\begin{equation*}
\sum r^{\beta_{1}}_{i} \simeq \sum_{j} q^{-j\beta_{1}} \#C_{j},  
\end{equation*}
so, in particular,~$\#C_{j} \lesssim c_{2}q^{j\beta_{1}}$ for all~$j$. 
By the pigeonhole principle, there exists~$N \gtrsim \log\frac{1}{\delta}$ such that
\begin{equation*}
\mu\Big(F \bigcap \Big(\bigcup\limits_{B(x_i,r_i)\in C_{N}}B(x_i,r_i)\Big)\Big) \geq \frac{6}{\pi^{2}} \frac{c_{1}}{N^{2}}.
\end{equation*}
Since any $B(x_i,r_i) \in C_{N}$ can be covered by at most $q+1$ arcs from the collection~$\{x+[-\frac{1}{2q^{N}},\frac{1}{2q^{N}}]\colon x \in T_{N,N}\}$, there exists a covering
\begin{equation*}
\tilde{C}_{N} \subset \Big\{x+\Big[-\frac{1}{2q^{N}},\frac{1}{2q^{N}}\Big]\colon x \in \TT_{N}\Big\}
\end{equation*}
such that $\#\tilde{C}_{N}\leq \# C_{N}$ and
\begin{equation*}
\mu\Big(\cup_{L \in \tilde{C}_{N}} L\Big) \geq \frac{1}{q+1}\mu\Big(F \bigcap \Big(\bigcup\limits_{B(x_i,r_i)\in C_{N}}B(x_i,r_i)\Big)\Big).
\end{equation*}
Let us call $\mathrm{Mid}(\tilde{C}_{N})$ the set of midpoints of arcs from $\tilde{C}_{N}$.
For the previously obtained $N$, we apply \eqref{app} and Lemma \ref{Frostman} with $\beta > \beta_{1}$ and obtain
\begin{multline*}
\frac{6}{\pi^{2}} \frac{c_{1}}{N^{2}(q+1)} \leq \mu(\cup_{L \in \tilde{C}_{N}} L) \leq \frac{1}{q^{N}}\sum_{x \in  {\mathrm{Mid}(\tilde{C}_{N})}} f(x) \lesssim \big{(} \# C_{N} \ q^{-\beta N} \big{)}^{\gamma}\|\mu\|
\lesssim\\ c_{2}^{\gamma} q^{\gamma(\beta_{1}-\beta)N}.
\end{multline*}
Hence we have $N^{2}q^{-c_{3}N} \geq c_{4} > 0$ for some positive constants $c_{3}, c_{4}$, independent of $\delta$ and $N$. On the other hand, we have $N\to \infty$ when $\delta \to 0$, which leads to a contradiction.
\end{proof}

\section{Proof of Theorem \ref{UP}}
\begin{proof}[Proof of Theorem~\ref{UP}]
In view of Theorem~\ref{transfer}, it suffices to show the inequality
\begin{equation*}
\kappa'(1) \geq -\log |H|
\end{equation*} 
provided~$B \subset H\setminus\{0\}$ and~$\kappa'(1) > -\log |H|$ in the case where the latter inclusion is proper. We will show that
\begin{equation}\label{LpEntropy}
\kappa\Big(\frac{1}{p}\Big) \leq \frac{p-1}{p}\log|H|
\end{equation}
for any~$p \in(1,\infty)$ and this inequality is strict if~$B \ne H$. Until the end of the proof the Fourier transform means the Fourier transform on~$\Z_q$.

Let $v \in W_B$. Then,~$v$ is the $\Z_{q}$-Fourier transform of a vector supported on $H$, so $v=v*\check{\chi}_{H}=v*\chi_{H^{\perp}}$ (provided we properly adjust the constants in the definition of the Fourier transform). Hence, in the coordinates $(h,h') \in H \times H^{\perp} \simeq \Z_{q}$ we have $v(h,h')=v(h,0)$ for all~$h$ and~$h'$ in~$\mathbb{Z}_q$, i.e. $v$ depends on the first coordinate only. We see that each extremal point $x_{0}$ of the set
\begin{equation}\label{addcond}
\Big\{x\in\R_{0}^{q}\colon \forall (h,h')\in \Z_{q} \quad x(h,h')=x(h,0);\ x(h,h')\geq -1\Big\}
\end{equation}
is characterized by the property that the function~$H\ni h \mapsto x_{0}(h,0)$ attains the value $|H|-1$ at some $h$ and $-1$ at the remaining $|H|-1$ elements. From this, the convexity of the~$p$-norm, and formula~\eqref{kappa}, we get
\begin{equation*}
\kappa\Big(\frac{1}{p}\Big) \leq \frac{1}{p}\log\Big(\frac{|H^\perp|}{q}|H|^p\Big) = \frac{p-1}{p}\log|H|.
\end{equation*}

This and the strict convexity of the~$L_p$-norm proves that~\eqref{LpEntropy} is strict provided the inclusion~$B \subset H\setminus\{0\}$ is proper. In this case,~$\kappa'(1) > -\log|H|$ since the function~$\kappa$ is convex.
\end{proof}
\begin{rem}
Theorem \ref{UP} is not true if we consider all complex measures; the counterexample is $B= \{l\}$ and $ \mu = \frac{1}{q}\sum_{k=0}^{q-1}\omega^{kl}\delta_{\{\omega^{k}\}}$.
\end{rem}

%
%
\section{Proof of Theorem \ref{rieszestimate1}}
We will rely upon the simple observation that~$\mu_{a,q}\in \M_{\CC_{\{1,q-1\}}}$. So, our aim is to compute the value~$\kappa'(1)$ for the case~$B = \{1,q-1\}$. In this case, any $v \in W_{B}$ is of the form
\begin{equation*}
v = a \omega_{1} + \bar{a}\omega_{q-1}, \quad \text{for some} \quad a \in \C.
\end{equation*}
The above gives
\begin{equation*} 
W_B = \bigg\{c\Big(\cos\Big(\frac{2\pi j}{q} + \varphi\Big)\Big)_{j=0}^{q-1}\colon c\in \R, \varphi \in [-\pi,\pi]\bigg\}.
\end{equation*}
According to~\eqref{kappaprim}, our purpose is to maximize the quantity
\begin{equation}\label{BigBeautifulFormula}
\sum\limits_{j=0}^{q-1}\Big(1-\gamma\cos\Big(\frac{2\pi j}{q} + \varphi\Big)\Big)\log\Big(1-\gamma\cos\Big(\frac{2\pi j}{q} + \varphi\Big)\Big),
\end{equation}
where~$\gamma$ is chosen in such a way that all the summands are well-defined (the quantity we compute the logarithm of is non-negative) and~$\varphi \in [-\frac{\pi}{q},\frac{\pi}{q}]$. So, we need to maximize a convex function over a convex region. Without loss of generality, we may assume that at least one of the summands vanishes. Since~$\varphi \in [-\frac{\pi}{q},\frac{\pi}{q}]$ this leads to~$\gamma = (\cos \varphi)^{-1}$.

Therefore, the supremum of~\eqref{BigBeautifulFormula} equals
\begin{multline}\label{EntropySpecification}
\sup\limits_{\varphi\in [-\frac{\pi}{q},\frac{\pi}{q}]} \sum\limits_{j=0}^{q-1}\Big(1- \frac{\cos(\frac{2\pi j}{q} + \varphi)}{\cos\varphi}\Big)\log \Big(1- \frac{\cos(\frac{2\pi j}{q} + \varphi)}{\cos\varphi}\Big)=\\
\sup\limits_{\varphi\in [-\frac{\pi}{q},\frac{\pi}{q}]} \sum\limits_{j=0}^{q-1} \Big(1- \cos\frac{2\pi j}{q} + \sin\frac{2\pi j}{q}\tan \varphi\Big)\log\Big(1- \cos\frac{2\pi j}{q} + \sin\frac{2\pi j}{q}\tan \varphi\Big).
\end{multline}
Consider the function~$g$:
\begin{equation*}
g(x) = \sum\limits_{j=0}^{q-1}(a_j + b_j x)\log(a_j + b_j x), \quad x\in \big[-\tan\frac{\pi}{q},\tan\frac{\pi}{q}\big]
\end{equation*}
where~$a_j = 1-\cos\frac{2\pi j}{q}$ and~$b_j = \sin\frac{2\pi j}{q}$. 
\begin{lem}\label{ConvexOptimisationLemma}
For any~$q \geq 3$\textup,
\begin{equation*}
\sup\limits_{x\in [-\tan \frac{\pi}{q},\tan \frac{\pi}{q}]} g(x) = g\Big(\tan \frac{\pi}{q}\Big)
\end{equation*}
\end{lem}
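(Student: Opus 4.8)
The plan is to reduce the lemma to two soft facts: on the segment $[-\tan\frac{\pi}{q},\tan\frac{\pi}{q}]$ the function $g$ is convex, and it is even. Granting both, a convex function on a compact interval attains its maximum at an endpoint, so $\sup g\in\{g(\tan\frac{\pi}{q}),g(-\tan\frac{\pi}{q})\}$, and evenness forces these two values to coincide; the common value is then the claimed supremum.

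First I would establish convexity; the only subtlety is that $g$ must first be shown to be well defined on the whole segment, i.e.\ that $a_j+b_jx\ge0$ there. Writing $x=\tan\varphi$ with $\varphi\in[-\frac{\pi}{q},\frac{\pi}{q}]$ and using the addition formula,
\[
a_j+b_j\tan\varphi=1-\cos\tfrac{2\pi j}{q}+\sin\tfrac{2\pi j}{q}\tan\varphi=1-\frac{\cos\!\big(\tfrac{2\pi j}{q}+\varphi\big)}{\cos\varphi},
\]
which is nonnegative: for $1\le j\le q-1$ the angle $\frac{2\pi j}{q}+\varphi$ lies in $[\frac{\pi}{q},2\pi-\frac{\pi}{q}]$, hence $\cos(\frac{2\pi j}{q}+\varphi)\le\cos\frac{\pi}{q}\le\cos\varphi$ (and the $j=0$ summand is identically $0$). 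Since $t\mapsto t\log t$, extended by $0$ at $t=0$, is continuous and convex on $[0,\infty)$, each summand $x\mapsto(a_j+b_jx)\log(a_j+b_jx)$ is that convex function precomposed with an affine map, hence convex; so $g$, a finite sum of such, is convex, and its maximum over the segment is attained at one of the endpoints $\pm\tan\frac{\pi}{q}$.

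Next I would use the symmetry $j\mapsto q-j$, which modulo $q$ is an involution of $\{0,1,\dots,q-1\}$. Because $a_j$ and $b_j$ depend on $j$ only through $\cos\frac{2\pi j}{q}$ and $\sin\frac{2\pi j}{q}$ respectively, this involution fixes every $a_j$ and reverses the sign of every $b_j$; reindexing the sum then gives
\[
g(-x)=\sum_{j=0}^{q-1}(a_j-b_jx)\log(a_j-b_jx)=\sum_{j=0}^{q-1}(a_{q-j}+b_{q-j}x)\log(a_{q-j}+b_{q-j}x)=g(x),
\]
so $g$ is even and the lemma follows as explained. I do not expect a genuine obstacle here: the only point that needs care is the nonnegativity of the logarithm arguments on the whole segment — the elementary bound $\cos(\frac{2\pi j}{q}+\varphi)\le\cos\frac{\pi}{q}$, valid for every $q\ge3$ — and once convexity and evenness are in hand the statement is immediate.
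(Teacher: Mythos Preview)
Your proof is correct and follows exactly the same approach as the paper's: establish that $g$ is convex on the interval (via nonnegativity of $a_j+b_jx$ and convexity of $t\mapsto t\log t$) and that $g$ is even (via the symmetry $j\mapsto q-j$), whence the maximum occurs at the endpoints. You have simply supplied the details the paper leaves implicit, in particular the verification that $a_j+b_jx\ge 0$ throughout the segment.
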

In particular, the supremum in~\eqref{EntropySpecification} is attained at the endpoints since~$\tan$ is a monotone function on~$[-\tan \frac{\pi}{q},\tan \frac{\pi}{q}]$.
\begin{proof}[Proof of Lemma~\ref{ConvexOptimisationLemma}]
Note that~$g$ is convex since the expressions~$a_j + b_j x$ are linear and non-negative when~$x\in [-\tan \frac{\pi}{q},\tan \frac{\pi}{q}]$, and the function~$t\mapsto t\log t$ is convex on the positive semi-axis. It remains to add that~$g$ is symmetric. 
\end{proof}
\begin{proof}[Proof of Theorem~\ref{rieszestimate1}]
The result follows from Theorem~\ref{transfer} and the already proved formula
\begin{equation}\label{KappaPrimRiesz}
\kappa'(1) = -\frac{1}{q}\sum\limits_{j=1}^{q-2}\Big(1 - \frac{\cos\frac{(2j+1)\pi}{q}}{\cos\frac{\pi}{q}}\Big)\log \Big(1 - \frac{\cos\frac{(2j+1)\pi}{q}}{\cos\frac{\pi}{q}}\Big)
\end{equation}
for the case~$B = \{1,q-1\}$. 
\end{proof}

\section{Proof of Proposition~\ref{rieszestimate}}
Assume~$q$ is even. This assumption is pivotal for the forthcoming lemma. 
\begin{lem}\label{Computation}
For any even~$q$\textup, the following identity holds\textup:
\begin{multline}\label{IdentitySumIntegral}
\sum\limits_{j=1}^{q-2}\Big(1 - \frac{\cos\frac{(2j+1)\pi}{q}}{\cos\frac{\pi}{q}}\Big)\log \Big(1 - \frac{\cos\frac{(2j+1)\pi}{q}}{\cos\frac{\pi}{q}}\Big) =\\ (1 - \log 2)q + 2\log 2+ \frac{2}{q\cos\frac{\pi}{q}}\int\limits_{\frac{\pi}{2}}^{\frac{q\pi}{4}}\log(\cos^2z)\sin \frac{2z}{q}\,dz - q\log\cos\frac{\pi}{q}.
\end{multline}
\end{lem}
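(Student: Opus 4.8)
The plan is to evaluate the sum on the left-hand side of \eqref{IdentitySumIntegral} by separating the coefficient $1 - \frac{\cos\frac{(2j+1)\pi}{q}}{\cos\frac{\pi}{q}}$ into its two additive pieces and exploiting the symmetry $j \mapsto q-1-j$, which sends $\cos\frac{(2j+1)\pi}{q}$ to $\cos\frac{(2(q-1-j)+1)\pi}{q} = \cos\left(2\pi - \frac{(2j+1)\pi}{q}\right) = \cos\frac{(2j+1)\pi}{q}$; so the summand is invariant under this reflection. First I would write $c_j := 1 - \frac{\cos\frac{(2j+1)\pi}{q}}{\cos\frac{\pi}{q}}$ and split $\sum_{j=1}^{q-2} c_j\log c_j = \sum_j c_j \log\left(\cos\frac{\pi}{q}\,c_j\right) - \log\cos\frac{\pi}{q}\sum_j c_j$, since $\cos\frac{\pi}{q}\, c_j = \cos\frac{\pi}{q} - \cos\frac{(2j+1)\pi}{q} = 2\sin\frac{j\pi}{q}\sin\frac{(j+1)\pi}{q}$ by the product-to-sum formula — a cleaner quantity to take a logarithm of. The linear term $\sum_{j=1}^{q-2} c_j$ is elementary: $\sum_{j=0}^{q-1}\cos\frac{(2j+1)\pi}{q} = \mathrm{Re}\left(e^{i\pi/q}\sum_{j=0}^{q-1} e^{2\pi i j/q}\right) = 0$, and one peels off the $j=0$ and $j=q-1$ terms (both equal to $1 - \frac{\cos\frac{\pi}{q}}{\cos\frac{\pi}{q}} = 0$, so in fact $\sum_{j=1}^{q-2} c_j = \sum_{j=0}^{q-1} c_j = q$), which already accounts for the $-q\log\cos\frac{\pi}{q}$ term on the right.

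Next I would tackle $S := \sum_{j=1}^{q-2} c_j\log\left(2\sin\frac{j\pi}{q}\sin\frac{(j+1)\pi}{q}\right)$. Expanding the logarithm gives $S = q\log 2 + \sum_{j=1}^{q-2} c_j\log\sin\frac{j\pi}{q} + \sum_{j=1}^{q-2} c_j\log\sin\frac{(j+1)\pi}{q}$, and a shift of index in the last sum, together with the reflection symmetry of $c_j$ (note $c_0 = c_{q-1} = 0$ so the boundary terms created by the shift vanish), collapses the two sums into $2\sum_{j=1}^{q-1} c_j\log\sin\frac{j\pi}{q}$. At this point the evenness of $q$ enters decisively: writing $c_j = 1 - \cos\frac{2\pi j}{q}/\cos\frac{\pi}{q} \cdot(\text{something})$ — more precisely using $\cos\frac{(2j+1)\pi}{q} = \cos\frac{2\pi j}{q}\cos\frac{\pi}{q} - \sin\frac{2\pi j}{q}\sin\frac{\pi}{q}$, so $c_j = 1 - \cos\frac{2\pi j}{q} + \sin\frac{2\pi j}{q}\tan\frac{\pi}{q}$ — the contribution of the $\sin\frac{2\pi j}{q}\tan\frac{\pi}{q}$ part to $\sum_j c_j\log\sin\frac{j\pi}{q}$ can be paired under $j\mapsto q-j$ against $\log\sin\frac{(q-j)\pi}{q} = \log\sin\frac{j\pi}{q}$ while $\sin\frac{2\pi(q-j)}{q} = -\sin\frac{2\pi j}{q}$; for even $q$ this antisymmetric term does not entirely cancel because of the fixed point $j=q/2$, but it organizes into a controllable piece. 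The remaining even part $\sum_{j=1}^{q-1}(1-\cos\frac{2\pi j}{q})\log\sin\frac{j\pi}{q}$ is a classical trigonometric sum.

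The core computation is to recognize the surviving sum as a Riemann sum for $\int_0^1 (1-\cos 2\pi t)\log\sin\pi t\,dt$ type integrals — but since the statement is an exact identity, not an asymptotic one, I would instead evaluate the finite sums exactly. The standard tool is $\prod_{j=1}^{q-1}\sin\frac{j\pi}{q} = \frac{q}{2^{q-1}}$, giving $\sum_{j=1}^{q-1}\log\sin\frac{j\pi}{q} = \log q - (q-1)\log 2$, which handles the "$1$" part of $c_j$ and produces the $2\log 2$ and part of the $(1-\log 2)q$ on the right. For the $\cos\frac{2\pi j}{q}\log\sin\frac{j\pi}{q}$ and $\sin\frac{2\pi j}{q}\log\sin\frac{j\pi}{q}$ pieces I would use the Fourier-series expansion $\log\sin\pi t = -\log 2 - \sum_{k\ge 1}\frac{\cos 2\pi k t}{k}$ evaluated at $t = j/q$ and summed against $\cos\frac{2\pi j}{q}$ or $\sin\frac{2\pi j}{q}$, exploiting the orthogonality relations $\frac1q\sum_{j=0}^{q-1} e^{2\pi i(k\pm 1)j/q} = \mathbf 1_{q \mid k\pm 1}$; only the terms $k\equiv \pm 1 \pmod q$ survive, yielding an explicit series in $1/(mq\pm 1)$ that telescopes into the integral $\frac{2}{q\cos\frac{\pi}{q}}\int_{\pi/2}^{q\pi/4}\log(\cos^2 z)\sin\frac{2z}{q}\,dz$ after the substitution $z = \frac{\pi j}{2}$ relating the residual discrete sum to an integral remainder. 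The main obstacle will be this last bookkeeping step: correctly matching the leftover pieces of the partial Fourier sums — which do not vanish precisely because $q$ is even and finite — with the stated integral, keeping careful track of the substitution and the range $[\pi/2, q\pi/4]$. Everything else is symmetry manipulation and the two classical product/Fourier identities for $\sin\frac{j\pi}{q}$.
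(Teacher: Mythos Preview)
Your initial manipulations are sound: the factorization $\cos\frac{\pi}{q}\,c_j = 2\sin\frac{j\pi}{q}\sin\frac{(j+1)\pi}{q}$, the evaluation $\sum c_j = q$ giving the $-q\log\cos\frac{\pi}{q}$ term, and the index-shift/reflection argument collapsing the two log-sine sums into one are all correct. One small correction: the antisymmetric contribution $\tan\frac{\pi}{q}\sum_{j=1}^{q-1}\sin\frac{2\pi j}{q}\log\sin\frac{j\pi}{q}$ vanishes \emph{exactly} under $j\mapsto q-j$ (the fixed point $j=q/2$ contributes $\sin\pi=0$), so there is no ``controllable piece'' left over --- this actually simplifies your task.

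The genuine gap is the final step. After your reductions you are left with the closed finite sum
\[
q\log 2 + 4\sum_{j=1}^{q-1}\sin^2\tfrac{j\pi}{q}\,\log\sin\tfrac{j\pi}{q},
\]
and you need to show that this equals $(1-\log 2)q + 2\log 2 + \frac{2}{q\cos\frac{\pi}{q}}\int_{\pi/2}^{q\pi/4}\log(\cos^2 z)\sin\frac{2z}{q}\,dz$. Your plan for this --- expanding $\log\sin$ in its Fourier series, extracting an infinite series in $1/(mq\pm 1)$, and claiming it ``telescopes'' into the stated integral via a substitution ``$z=\pi j/2$'' --- does not work as written: $j$ is a discrete index, so no such substitution is available, and there is no telescoping structure in the resulting series. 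Matching an exact finite sum to an exact integral is not bookkeeping; it requires an identity that explains why a continuous antiderivative should appear at all.

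The paper supplies exactly that identity, by a completely different route. It sets
\[
f(a)=\sum_{j=0}^{q-1}\Big(a-\cos\tfrac{(2j+1)\pi}{q}\Big)\log\Big|a-\cos\tfrac{(2j+1)\pi}{q}\Big|,
\]
so that the left-hand side equals $f(\cos\frac{\pi}{q})/\cos\frac{\pi}{q} - q\log\cos\frac{\pi}{q}$. Differentiating in $a$ kills the entropic part and leaves $f'(a)=q+\log\prod_j|a-\cos\frac{(2j+1)\pi}{q}|$. The crucial observation, which your approach never touches, is that for $q=2p$ the nodes $\cos\frac{(2j+1)\pi}{q}$ are the zeros of the Chebyshev polynomial $T_p$ counted twice, so the product equals $2^{-q+2}T_p^2(a)$. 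Since $f(0)=0$ by symmetry (this is where evenness is used), the fundamental theorem of calculus yields $f(\cos\frac{\pi}{q})=\int_0^{\cos\frac{\pi}{q}}\big(q+\log(2^{-q+2}\cos^2(p\arccos a))\big)\,da$, and the substitutions $a=\cos x$, $z=px$ produce the integral in the statement directly. The integral is thus a natural consequence of differentiate--recognize--reintegrate, not of any series manipulation; your proposal lacks the key Chebyshev step and has no substitute for it.
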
 
\begin{proof}
Consider the function~$f\colon \mathbb{R}\to\mathbb{R}$ defined as follows:
\begin{equation*}
f(a) = \sum\limits_{j=0}^{q-1}\Big(a - \cos\frac{(2j+1)\pi}{q}\Big)\log \Big|a - \cos\frac{(2j+1)\pi}{q}\Big|.
\end{equation*}
The sum on the left hand-side of~\eqref{IdentitySumIntegral} is then equal to
\begin{equation*}
\frac{f(\cos \frac{\pi}{q})}{\cos \frac{\pi}{q}} - q\log \cos \frac{\pi}{q}.
\end{equation*}
The function~$f$ is absolutely continuous and
\begin{equation*}
f'(a) = \log\prod\limits_{j=0}^{q-1}\Big|a - \cos\frac{(2j+1)\pi}{q}\Big|+q = \log\Big(2^{-q+2}T_p^2(a)\Big)+q,
\end{equation*}
where~$q=2p$, by our assumptions,~$p \in\mathbb{N}$, and~$T_p$ is the Chebyshev polynomial of order~$p$, that is
\begin{equation*}
T_p(x) = \cos(p\arccos x) = 2^{p-1}\prod_{j=0}^{p-1}\Big(x-\cos\Big(\frac{(j+\frac12)\pi}{p}\Big)\Big),\qquad x\in [-1,1].
\end{equation*}

Note that by symmetry (here we heavily use that~$q$ is even),~$f(0) = 0$. Thus, since~$f$ is an absolutely continuous function,
\begin{multline*}
f\Big(\cos\frac{\pi}{q}\Big) = \int\limits_0^{\cos \frac{\pi}{q}} \Big(\log\Big(2^{-q+2}T_p^2(a)\Big)+q\Big)\,da =\\ (1-\log 2)q\cos\frac{\pi}{q} + 2\log 2\cos\frac{\pi}{q}+ \int\limits_0^{\cos\frac{\pi}{q}}\log\cos^2(p\arccos a)\,da = \\
(1-\log 2)q\cos\frac{\pi}{q} + 2\log 2\cos\frac{\pi}{q}+ \int\limits_{\frac{\pi}{q}}^{\frac{\pi}{2}} \log\cos^2(px)\sin x\,dx =\\ (1-\log 2)q\cos\frac{\pi}{q} + 2\log 2\cos\frac{\pi}{q} + \frac{2}{q}\int\limits_{\frac{\pi}{2}}^{\frac{q\pi}{4}}\log\cos^2z\sin\frac{2z}{q}\,dz.
\end{multline*}
So, the sum on the left hand-side of~\eqref{IdentitySumIntegral} equals
\begin{equation*}
(1-\log 2)q + 2\log 2 + \frac{2}{q\cos\frac{\pi}{q}}\int\limits_{\frac{\pi}{2}}^{\frac{q\pi}{4}}\log\cos^2z\sin\frac{2z}{q}\,dz - q\log\cos\frac{\pi}{q}.
\end{equation*}
\end{proof}
\begin{proof}[Proof of Proposition~\ref{rieszestimate}]
Since~$\mu_{a,q}\in \M_{\CC_{\{1,q-1\}}}$, Theorem~\ref{transfer} says that
\begin{equation*}
\dH(\mu_{a,q}) \geq 1+\frac{\kappa'(1)}{\log q}.
\end{equation*}
Thus, it remains to combine this estimate with formula~\eqref{KappaPrimRiesz} and Lemma~\ref{Computation}.
\end{proof}

\section{Proof of Proposition \ref{RieszEstimate}}
Let us begin with two technical lemmas.

\begin{lem} \label{numerical}
Let~$a\in [-1,1]$ be fixed. Consider the function~$g\colon\mathbb{R} \to \mathbb{R}$ given by the formula 
\begin{equation*}
g(x)=(1+a\cos x)\log(1+a\cos x).
\end{equation*}
Then\textup, for any~$a\in [-1,1]$ and any~$x$\textup,
\begin{equation*}
 |g'(x)|\leq 2.
\end{equation*}
\end{lem}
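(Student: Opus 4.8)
The plan is to reduce the bivariate estimate (in $a$ and $x$) to a single-variable calculus problem. First I would differentiate: setting $u = 1+a\cos x$, so that $g = u\log u$, the chain rule gives $g'(x) = -a\sin x\,\bigl(1+\log(1+a\cos x)\bigr)$ at every point where $u>0$; at the exceptional points where $u=0$ (which force $|a|=1$ and $\sin x = 0$) the derivative extends continuously by the value $0$. Hence $|g'(x)| = |a\sin x|\cdot\bigl|1+\log(1+a\cos x)\bigr|$.

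The crucial move is the substitution $t = a\cos x$. Since $|a|\le 1$ we have $|a\sin x| = \sqrt{a^2-a^2\cos^2 x} = \sqrt{a^2-t^2}\le\sqrt{1-t^2}$, and $t$ ranges over a subset of $[-1,1]$. Thus it suffices to prove that $\varphi(t) := \sqrt{1-t^2}\,\bigl|1+\log(1+t)\bigr| \le 2$ for all $t\in(-1,1]$, a statement no longer involving $a$ or $x$.

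To estimate $\varphi$ I would split $(-1,1]$ at $t_0 = e^{-1}-1$, the point where $1+\log(1+t)$ changes sign. On $[t_0,1]$ the factor $1+\log(1+t)$ is nonnegative and at most $1+\log 2<2$, while $\sqrt{1-t^2}\le 1$, so $\varphi<2$ at once. On $(-1,t_0)$ I would write $s = 1+t\in(0,e^{-1})$ and use $\sqrt{1-t^2} = \sqrt{s}\sqrt{2-s}\le\sqrt{2s}$, reducing the claim to $\sqrt{s}\,(-1-\log s)\le\sqrt{2}$; the left-hand side vanishes as $s\to 0^+$ and at $s=e^{-1}$, and its only interior critical point is $s=e^{-3}$, where it equals $2e^{-3/2}<\sqrt2$. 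That single region is the only place the logarithmic singularity of $\log(1+t)$ appears, and it is tamed precisely because the factor $\sqrt{1-t^2}$ vanishes there; this is the only mildly delicate point.

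I do not expect any genuine obstacle: every step is elementary, and the constant $2$ is far from optimal (the true maximum of $\varphi$ is close to $1.23$), so crude bounds in each of the two regions are more than enough.
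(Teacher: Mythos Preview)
Your argument is correct and self-contained. The route, however, differs from the paper's. After computing $g'(x)=-a\sin x\bigl(1+\log(1+a\cos x)\bigr)$, the paper fixes $x$ and argues that the expression is convex as a function of $a$ (invoking the convexity of $t\mapsto t\log t$), so the maximum over $a\in[-1,1]$ is attained at $a=\pm 1$; by symmetry this reduces to showing $\sin x\bigl(1+\log(1+\cos x)\bigr)\le 2$ for $x\in[0,\pi/2]$, which follows from the crude bounds $\sin x\le 1$ and $1+\cos x\le e$. Your substitution $t=a\cos x$ together with $|a\sin x|=\sqrt{a^2-t^2}\le\sqrt{1-t^2}$ achieves the same reduction to the extremal case $|a|=1$ by a direct inequality rather than a convexity argument. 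Your subsequent case split at $t_0=e^{-1}-1$ is more explicit than the paper's treatment: in particular you deal head-on with the region where $1+\log(1+t)$ is large and negative (the neighbourhood of $t=-1$), controlling it via $\sqrt{s}\,(-1-\log s)\le 2e^{-3/2}$, whereas the paper's final sentence only addresses the range where $1+\cos x\ge 1$. So your proof trades brevity for completeness and avoids having to justify convexity of an absolute value.
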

\begin{proof}
We have
\begin{equation*}
|g'(x)| = |-a\sin x - a\sin x\log(a\cos x + 1)|.
\end{equation*}
We fix~$x$ for a while and treat this expression as a function of~$a$. This function is convex (since the function~$t\log t$ is), so it attains its maximum at the endpoints~$a=\pm 1$. Therefore, it suffices to prove the inequality
\begin{equation*}
\sin x\big(1+\log(1+\cos x)\big) \leq 2,\quad x\in \big[0,\frac{\pi}{2}\big].
\end{equation*}
We estimate~$\sin x$ by one,~$1+\cos x$ by~$e$, and get it.
\end{proof}
\begin{rem}
If we denote
\begin{equation*}
L = \sup_{x\in [0,\frac{\pi}{2}]}\sin x\big(1+\log(1+\cos x)\big),
\end{equation*}
then the constant~$2$ in Lemma~\ref{numerical} might be replaced by~$L$. The numerical bound for~$L$ is~$1.25$.
\end{rem}

The following lemma may be found in the proof of Theorem 1 in \cite{F2}:

\begin{lem}\textup{(\cite{F2})}\label{1lip}
	For $a\in [-1,1]$\textup, let
	\begin{equation*}
	h(a) = \int_{0}^{2\pi} (1+a\cos x)\log(1+a\cos x)\frac{dx}{2\pi}
	\end{equation*}
	Then $h$ is $1$-Lipschitz.
\end{lem}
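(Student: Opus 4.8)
The plan is to show that $h$ is continuously differentiable on $(-1,1)$ with $|h'(a)|\le 1$, and then extend the Lipschitz bound to the closed interval by continuity. First I would differentiate under the integral sign, which is legitimate on every interval $[-1+\varepsilon,1-\varepsilon]$ because there the integrand and its $a$-derivative are uniformly bounded; using $\partial_a\big[(1+a\cos x)\log(1+a\cos x)\big]=\cos x\,(1+\log(1+a\cos x))$ and $\int_0^{2\pi}\cos x\,dx=0$, one gets
\begin{equation*}
h'(a)=\int_0^{2\pi}\cos x\,\log(1+a\cos x)\,\frac{dx}{2\pi}.
\end{equation*}
A change of variables $x\mapsto x+\pi$ shows $h'$ is odd, so it suffices to treat $a\in[0,1)$.

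Next I would write $\log(1+a\cos x)=\int_0^a\frac{\cos x}{1+t\cos x}\,dt$ (valid since $1+t\cos x\ge 1-a>0$ for $t\in[0,a]$) and apply Fubini to obtain
\begin{equation*}
h'(a)=\int_0^a\Big(\int_0^{2\pi}\frac{\cos^2 x}{1+t\cos x}\,\frac{dx}{2\pi}\Big)\,dt.
\end{equation*}
For the inner integral I would use the algebraic identity $\dfrac{\cos^2 x}{1+t\cos x}=\dfrac{1}{t^2}\Big(t\cos x-1+\dfrac{1}{1+t\cos x}\Big)$ together with the classical evaluation $\int_0^{2\pi}\frac{dx}{1+t\cos x}=\frac{2\pi}{\sqrt{1-t^2}}$, which gives
\begin{equation*}
\int_0^{2\pi}\frac{\cos^2 x}{1+t\cos x}\,\frac{dx}{2\pi}=\frac{1}{t^2}\Big(\frac{1}{\sqrt{1-t^2}}-1\Big)=\frac{1}{\sqrt{1-t^2}\,\big(1+\sqrt{1-t^2}\big)}.
\end{equation*}
Substituting $t=\sin\theta$ then yields $h'(a)=\int_0^{\arcsin a}\frac{d\theta}{1+\cos\theta}=\tan\frac{\arcsin a}{2}$, and since $\arcsin a\in[0,\frac{\pi}{2}]$ we conclude $0\le h'(a)\le 1$. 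By oddness $|h'(a)|\le 1$ for all $a\in(-1,1)$. Finally, $h$ is continuous on the closed interval $[-1,1]$ by dominated convergence (the function $t\mapsto t\log t$ is bounded on $[0,2]$), so the bound $|h(a)-h(b)|\le|a-b|$ extends to all $a,b\in[-1,1]$.

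The main technical points to watch are the removable singularity at $t=0$ in the algebraic identity above (the left side is simply $\cos^2 x$ there, and the closed form tends to $\frac12=\int_0^{2\pi}\cos^2 x\,\frac{dx}{2\pi}$ as $t\to 0$, so nothing goes wrong) and the passage to the endpoints $a=\pm1$, where the integrand $(1+a\cos x)\log(1+a\cos x)$ stays bounded while $\log(1+a\cos x)$ does not; this is why I prove $|h'|\le 1$ only on the open interval and then invoke continuity of $h$ rather than of $h'$. If one prefers to avoid the closed-form computation altogether, the representation in the second paragraph already shows $h'$ is nonnegative and nondecreasing on $[0,1]$, so it would suffice to check $h'(1)=\int_0^{2\pi}\cos x\log(1+\cos x)\,\frac{dx}{2\pi}=1$ directly via the half-angle identity $\log(1+\cos x)=\log 2+2\log|\cos(x/2)|$ and one integration by parts.
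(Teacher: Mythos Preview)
Your argument is correct, but it takes a considerably longer route than the paper's. The paper disposes of the lemma in three lines: since $t\mapsto t\log t$ is convex, $h$ is convex, hence $h'$ is nondecreasing; since $h$ is even (the substitution $x\mapsto x+\pi$), $h'$ is odd; therefore $|h'(a)|\le h'(1)=1$. No explicit evaluation of $h'$ is needed beyond the single endpoint value. Your approach, by contrast, computes $h'(a)$ in closed form as $\tan\!\big(\tfrac{1}{2}\arcsin a\big)=\dfrac{1-\sqrt{1-a^2}}{a}$ via the Poisson integral, which yields more information (an exact expression for the derivative) at the cost of a longer calculation. You do in fact sketch the shorter route in your final paragraph (monotonicity of $h'$ plus the endpoint check $h'(1)=1$), and that is essentially the paper's proof---the only difference being that the paper obtains monotonicity of $h'$ directly from convexity of $t\log t$ rather than from your integral representation. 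Your care with differentiation under the integral sign and with the endpoints $a=\pm 1$ is more explicit than the paper's, which treats these points tacitly.
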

\begin{proof}
	Similar to the proof of Lemma~\ref{numerical}, the function~$h$ is convex (since the function~$t\log t$ is). Moreover,~$h'$ is an odd function. Thus,~$h'(a) \leq h'(1) = 1$.
\end{proof}

\begin{proof}[Proof of Proposition~\ref{RieszEstimate}]
By Theorem~\ref{transfer},
\begin{equation*}
\dH(\mu_{a,q}) \geq 1+\frac{\kappa'(1)}{\log q},
\end{equation*}
where~$\kappa'(1)$ is defined in~\eqref{KappaPrimRiesz}. By Lemma~\ref{numerical},
\begin{equation*}
\kappa'(1) \geq -\frac{1}{2\pi}\int\limits_0^{2\pi} \Big(1+\frac{\cos x}{\cos\frac{\pi}{q}}\Big)\log\Big(1+\frac{\cos x}{\cos\frac{\pi}{q}}\Big)\,dx - \frac{4\pi}{q}.
\end{equation*}
It remains to apply Lemma~\ref{1lip} and use the identity
\begin{equation*}
\frac{1}{2\pi}\int\limits_0^{2\pi}(1+\cos x)\log(1+\cos x)\,dx = 1-\log 2.
\end{equation*}
 \end{proof}
 
\section{Further examples and comments}
A more general form of the Gundy--Varopoulos martingale also appears as an element of the proof of the dimension estimate in \cite{Pey}. In that paper, it is used to prove a version of the pointwise ergodic theorem with respect to Riesz products. 

The assumption of being a non-negative measure from $\M_{B}(\T)$ implies the symmetry of~$B$. Theorems corresponding to the case when $B$ is (strongly) antisymmetric were considered in \cite{CJ}.

\begin{rem} For a fixed $q$\textup, if $B\neq \Z_{q}\setminus\{0\}$, then
\[
	\dH(\mu) \geq \delta_{q} > 0	
\]
for any finite non-negative measure from $\M_{C_{B}}(\T)$. If $q$ is small\textup, then the constant $\delta_{q}$ may be estimated by the analysis of extremal points of
\[
span\{\omega_{m}\}_{m \in B} \cap \{x \in \R_{0}^{q}: \forall j \quad x_{j} \geq -1\}.
\]
For example\textup, if $q=4$ then we may take $B=\{2\}$ or $B=\{1,3\}$. In the first case, the extremal points are $\pm (1,-1,1,-1)$\textup, while for the second choice they are $\pm(1,1,-1,-1),\pm (1,-1,-1,1)$. This gives $\delta_{4} \geq \frac{1}{2}$.
\end{rem}
An obvious converse of Theorem~\ref{UP} says that singular measures have rich spectrum in the arithmetical sense.
\begin{cor}
	Let $\mu \in \M(\T)$ be a non-negative finite measure such that
	\[
	\dH(\mu) < \delta_{q},
	\]
	where $\delta_{q}$ is as in the above remark. Then for each $ m \in \{1,\dots, q-1\}$  there exists $n \in \mathrm{spec}(\mu)$ such that $n$ has a divisor with residue $m$ modulo $q$.
\end{cor}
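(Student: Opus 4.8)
The plan is to derive this as a straightforward contrapositive consequence of Theorem~\ref{UP}. Fix $q$ and suppose $\mu$ is a non-negative finite measure with $\dH(\mu) < \delta_q$, where $\delta_q$ is the constant furnished by the last clause of Theorem~\ref{UP} (the one asserting $\dH(\mu) > \delta$ for any non-negative $\mu \in \M_{\CC_B}(\T)$ whenever $B \subsetneq \Z_q\setminus\{0\}$, uniformly in $\mu$). The goal is to show that the set
\begin{equation*}
m \in \{1,\dots,q-1\} \colon \exists n \in \spec(\mu) \text{ with a divisor congruent to } m \bmod q
\end{equation*}
is all of $\{1,\dots,q-1\}$. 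So, I would argue by contradiction: assume some residue $m_0 \in \{1,\dots,q-1\}$ is missing, i.e. no $n \in \spec(\mu)$ has a divisor congruent to $m_0$ mod $q$.

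The key step is to translate this missing-residue hypothesis into the statement $\mu \in \M_{\CC_B}(\T)$ for a suitable proper subset $B \subsetneq \Z_q\setminus\{0\}$. Recall $\CC_B = \{kq^n \colon k \bmod q \in B,\ n\ge 0\}\cup\{0\}$. I would take $B = \{1,\dots,q-1\}\setminus\{m_0\}$, so $B \subsetneq \Z_q\setminus\{0\}$ properly. The claim is that $\spec(\mu)\subset \CC_B$: indeed, if $n \in \spec(\mu)$ is nonzero, write $n = k q^{v}$ with $q \nmid k$ (so $k \equiv k \bmod q$ is a unit-free residue, possibly $0$ if... no: since $q\nmid k$, the residue $k \bmod q$ lies in $\{1,\dots,q-1\}$). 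Now $k$ is a divisor of $n$ with residue $k\bmod q$; by the missing-residue hypothesis this residue cannot be $m_0$, hence $k \bmod q \in B$, so $n \in \CC_B$. This shows $\hat\mu$ is supported on $\CC_B$, i.e. $\mu \in \M_{\CC_B}(\T)$.

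Once $\mu \in \M_{\CC_B}(\T)$ with $B$ a proper subset of $\Z_q\setminus\{0\}$, Theorem~\ref{UP} gives $\dH(\mu) > \delta$ for the specific uniform $\delta>0$ appearing there; identifying this $\delta$ with $\delta_q$ (as the remark does) yields $\dH(\mu) \ge \delta_q$, contradicting $\dH(\mu) < \delta_q$. Hence no residue is missing, which is the assertion. I do not expect any serious obstacle here; the only point requiring a little care is the bookkeeping in the previous paragraph — making sure that a nonzero spectral point $n$, after factoring out the maximal power of $q$, really leaves a cofactor whose residue mod $q$ is a nonzero element of $\Z_q$, and that this cofactor is genuinely a \emph{divisor} of $n$ in the sense of the corollary's hypothesis. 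If one wants to be even more generous, one can note that $n$ itself, or any divisor $d \mid n$ with $q\nmid d$, would serve; the argument only needs the existence of one forbidden residue to be violated, so the logic is robust. The uniformity of $\delta_q$ in $\mu$ (already built into Theorem~\ref{UP}) is what makes the statement of the corollary clean.
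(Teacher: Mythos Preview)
Your argument is correct and is precisely the contrapositive reasoning the paper has in mind when it calls the corollary ``an obvious converse of Theorem~\ref{UP}''; the paper gives no separate proof. The only cosmetic point is that you could invoke the preceding Remark directly (it already packages the uniform constant as $\delta_q$ for all proper $B\subsetneq\Z_q\setminus\{0\}$), rather than routing through Theorem~\ref{UP} and then identifying $\delta$ with $\delta_q$; your bookkeeping about factoring $n=kq^v$ with $q\nmid k$ and observing that $k\mid n$ forces $k\bmod q\ne m_0$ is exactly right.
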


\end{document}